\documentclass{amsart}

\usepackage{amsmath,amsfonts,amssymb}
\usepackage{amsthm}
\usepackage{ifthen}
\usepackage{longtable}
\usepackage{array}
\usepackage{url}
\usepackage{multirow}
\usepackage{tikz}
\numberwithin{equation}{section}

\newcommand{\MM}{{\mathcal M}}
\newcommand{\N}{{\mathbb N}}

\newcommand{\Q}{{\mathbb Q}}

\newcommand{\Z}{{\mathbb Z}}

\newcommand{\U}{{\mathcal U}}

\newcommand{\dv}{{\bold v}}

\newcommand{\D}{\text{$\mathcal{D}$}}

\newcommand{\F}{\text{$\mathbb{F}$}}
\newcommand{\G}{\text{$\mathbb{G}$}}
\newcommand{\C}{\text{$\mathcal{C}$}}
\renewcommand{\H}{\text{$\mathbb{H}$}}

\newcommand{\XX}{\mathbb X}
\newcommand{\YY}{\mathbb Y}
\newcommand{\E}{\mathcal E}
\newcommand{\na}{\nu^\ast}
\newcommand{\ma}{\mu^\ast}
\newcommand{\A}{\mathbb A}
\newcommand{\la}{\ell^\ast}

\newtheorem{theorem}{Theorem}[section]
\newtheorem{example}[theorem]{Example}
\newtheorem{definition}[theorem]{Definition}
\newtheorem{lemma}[theorem]{Lemma}
\newtheorem{corollary}[theorem]{Corollary}
\newtheorem{proposition}[theorem]{Proposition}

\theoremstyle{remark}
\newtheorem{remark}[theorem]{Remark}

\begin{document}

\title{Measure density for set decompositions and uniform distribution}

\dedicatory{Dedicated to W. G. Nowak on the occasion of his $60$th birthday}

\subjclass[2010]{11K06, 11J71, 11A67, 28A05, 28C10} \keywords{Uniform distribution, measure theory}

\author[M.R. Iac\`o]{Maria Rita Iac\`o}
\address{M.R. Iac\`o \newline
\indent Graz University of Technology, \newline
\indent Institute of Mathematics A,\newline
\indent  Steyrergasse 30, 8010 Graz, Austria.}
\email{iaco\char'100math.tugraz.at}

\author[M. Pasteka]{Milan Pa\v{s}t\'eka}
\address{M. Pa\v{s}t\'eka \newline
\indent Pedagogick\'a fakulta TU v Trnava, \newline
\indent Priemyseln\'a 4, P.O. BOX 9, Sk-918 43, Trnava, Slovakia.}
\email{milan.pasteka\char'100truni.sk}

\author[R. Tichy]{Robert F. Tichy}
\address{R. F. Tichy \newline
\indent Graz University of Technology, \newline
\indent Institute of Mathematics A,\newline
\indent  Steyrergasse 30, 8010 Graz, Austria.}
\email{tichy\char'100tugraz.at}

\thanks{The first and third author are supported by the Austrian Science Fund (FWF): Project F5510, which is a part
of the Special Research Program "Quasi-Monte Carlo Methods: Theory and Applications"}

\begin{abstract}
The aim of this paper is to extend the concept of measure density introduced by Buck for finite unions of arithmetic progressions, to arbitrary subsets of $\N$ defined by a given system of decompositions. This leads to a variety of new examples and to applications to uniform distribution theory.
\end{abstract}

\maketitle

\section{Introduction and notation}

Let $S \subset\mathbb{N}$ be a subset of the set of positive integers. Then the limit

\begin{equation*}
d(S) = \lim_{N \to \infty}\frac{\#\{n \le N; n \in S\}}{N}
\end{equation*}

(if it exists) is called the {\it asymptotic density} of $S$.
Let us fix a positive integer $m \in \N$ and $a \in \N \cup \{0\}$. Clearly,

\begin{equation*}
a+(m)=\{x \in \N; x \equiv a\ ({\rm mod}\ m) \}\ ,
\end{equation*}
is an arithmetic progression, and $d(a+(m)) = \frac{1}{m}$.\\ Starting from the asymptotic density of finite unions
of arithmetic progressions, Buck \cite{Buck} defined the set function

\begin{equation*}
\ma(S)= \inf\left\{ \sum_{k=1}^n \frac{1}{m}_k;\ S \subset \bigcup_{k=1}^n a_k+(m_k) \right\}\ ,
\end{equation*}
now called {\it Buck measure density}. In general, $d(S) \le \ma(S)$ holds, but there are several examples of sets $S$ such that $d(S) \neq \ma(S)$.\\ The system of sets defined by

\begin{equation*}
\D_\mu = \{S \subset \N;\ \ma(S) + \ma(\N \setminus S) = 1 \}
\end{equation*}

is an algebra of sets and its elements are called {\it Buck measurable sets}. Moreover, the restriction $\mu = \ma|_{\D_\mu}$ is a finitely additive measure.\\ Our aim is to extend the definition of $\mu(S)$ to a bigger class of sets $S$; in this context $\mu$ extends to a $\sigma$-additive measure.

Let $\{s_n\}_{n\in\mathbb{N}}$ be a given sequence of numbers. Then the "counting set function" $A(S; \{s_n\})$ is defined to be the set of positive integers given by

\begin{equation}
A(S; \{s_n\}) =\{n \in \N; s_n \in S \}\ .
\end{equation}
A sequence of positive integers $\{s_n\}$ is called {\it uniformly distributed in $\Z$} (for short u.d. in $\Z$, details see in \cite{Niven}) if and only if for every arithmetic progression $a+(m)$ we have 

\begin{equation*}
d(A(a+(m)); \{s_n\}) = \frac{1}{m}\ .
\end{equation*}
The following characterization of Buck measurability is proved in \cite[Theorem 7, page 51]{Pasteka}.

\begin{theorem}\label{thm1}
 A set $S \subset \N$ belongs to $\D_\mu$ if and only if $d(\A(\{s_n\},S))=\ma(S)$ holds for every uniformly distributed sequence $\{s_n\}_{n\in\mathbb{N}}$ in $\Z$.
\end{theorem}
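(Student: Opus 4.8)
The plan is to reduce both directions to a single quantitative fact relating how a uniformly distributed (u.d.) sequence can "see" an arbitrary set $T\subseteq\N$ to its Buck density: namely that for every u.d.\ sequence $\{s_n\}$ one has $\dH(A(T;\{s_n\}))\le\ma(T)$, and that this bound is sharp in the sense that $\sup_{\{s_n\}}\dH(A(T;\{s_n\}))=\ma(T)$, the supremum ranging over all u.d.\ sequences. The upper bound is the routine half: for any covering $T\subseteq\bigcup_{k=1}^{n}a_k+(m_k)$ we have $A(T;\{s_n\})\subseteq\bigcup_k A(a_k+(m_k);\{s_n\})$, and since $\{s_n\}$ is u.d.\ each $A(a_k+(m_k);\{s_n\})$ has density $1/m_k$; subadditivity of the upper density gives $\dH(A(T;\{s_n\}))\le\sum_k 1/m_k$, and taking the infimum over all coverings yields $\dH(A(T;\{s_n\}))\le\ma(T)$.

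From this inequality the forward implication follows by a squeezing argument. Since the $s_n$ are positive integers, the sets $A(S;\{s_n\})$ and $A(\N\setminus S;\{s_n\})$ partition $\N$, so $A(\N\setminus S;\{s_n\})=\N\setminus A(S;\{s_n\})$ and hence $\dD(A(S;\{s_n\}))=1-\dH(A(\N\setminus S;\{s_n\}))$. Applying the inequality to both $S$ and $\N\setminus S$ yields
\[
1-\ma(\N\setminus S)\le \dD(A(S;\{s_n\}))\le \dH(A(S;\{s_n\}))\le\ma(S).
\]
If $S\in\D_\mu$, then $\ma(S)+\ma(\N\setminus S)=1$, the outer bounds coincide, and the chain collapses to $d(A(S;\{s_n\}))=\ma(S)$; as $\{s_n\}$ was an arbitrary u.d.\ sequence, this is the asserted conclusion.

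For the converse I argue through the complement together with the sharpness half. Suppose $d(A(S;\{s_n\}))=\ma(S)$ for every u.d.\ sequence. Complementing as above gives $d(A(\N\setminus S;\{s_n\}))=1-\ma(S)$ for every u.d.\ sequence, so in particular $\dH(A(\N\setminus S;\{s_n\}))=1-\ma(S)$ takes the same value along all u.d.\ sequences; taking the supremum and invoking $\sup_{\{s_n\}}\dH(A(\N\setminus S;\{s_n\}))=\ma(\N\setminus S)$ forces $\ma(\N\setminus S)=1-\ma(S)$, i.e.\ $S\in\D_\mu$.

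It therefore remains to establish the sharpness $\sup_{\{s_n\}}\dH(A(T;\{s_n\}))=\ma(T)$, and this is the main obstacle: for each $\varepsilon>0$ I must construct a u.d.\ sequence visiting $T$ with upper density at least $\ma(T)-\varepsilon$. The construction is a block concatenation. I fix near-optimal coverings $U_j\supseteq T$ by residue classes modulo $M_j$ with $M_1\mid M_2\mid\cdots$ and $d(U_j)\to\ma(T)$; after discarding and cheaply re-covering the finitely many classes meeting $T$ in a finite set, I may assume every class of $U_j$ contains infinitely many elements of $T$. The $j$-th block lists each residue class modulo $M_j$ exactly $L_j$ times, taking the representatives inside $T$ for the classes belonging to $U_j$ and arbitrarily otherwise; because $T\subseteq U_j$, the fraction of the block lying in $T$ equals $d(U_j)\to\ma(T)$. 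Each block is by construction equidistributed modulo its own $M_j$, hence modulo every divisor of $M_j$, so for a fixed modulus $m$ all but finitely many blocks are equidistributed modulo $m$. The delicate bookkeeping is to choose the repetition counts $L_j$ (equivalently the block lengths $\ell_j=L_jM_j$) growing fast enough that (i) the cumulative length preceding block $j$ dominates $M_j$, which kills the boundary and partial-block discrepancy and secures uniform distribution modulo every $m$, while (ii) block $j$ dominates that cumulative length, so that at the times ending each block the frequency of $T$ tends to $\ma(T)$, giving $\dH(A(T;\{s_n\}))=\ma(T)$. Verifying that requirements (i) and (ii) are simultaneously satisfiable and that partial blocks do not spoil the uniform-distribution estimate is the crux of the argument.
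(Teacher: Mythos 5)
The paper does not actually prove this theorem: it is quoted from Pa\v{s}t\'eka's monograph (Theorem 7, p.~51 of \cite{Pasteka}), so there is no in-paper argument to compare against. Your architecture --- the covering bound $\dH(A(T;\{s_n\}))\le\ma(T)$ for every u.d.\ sequence and every $T\subseteq\N$, the squeeze through the complement for the forward direction, and a block-concatenation construction realizing $\sup_{\{s_n\}}\dH(A(T;\{s_n\}))=\ma(T)$ applied to $T=\N\setminus S$ for the converse --- is the standard route to this result, and the logic of both reductions is sound: the identity $A(\N\setminus S;\{s_n\})=\N\setminus A(S;\{s_n\})$ is correct because the $s_n$ are positive integers, and the hypothesis of the converse applied to the specially built sequence does force $\ma(\N\setminus S)=1-\ma(S)$. (The ``discard and re-cover'' step is unnecessary: since a sequence may repeat values, a class of $U_j$ only needs nonempty intersection with $T$, and classes with empty intersection can simply be dropped, which only improves the covering.)

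The one genuine gap is in the sharpness construction. As written, the moduli $M_j$ are only required to form a divisibility chain $M_1\mid M_2\mid\cdots$ inherited from the near-optimal coverings, and nothing forces an arbitrary modulus $m$ to divide any $M_j$. Take $T=2\N$: the optimal coverings all live modulo $2$, so every block is a union of sweeps through a residue system mod $2$, and your assertion that ``for a fixed modulus $m$ all but finitely many blocks are equidistributed modulo $m$'' fails already for $m=3$; the concatenated sequence need not be u.d.\ in $\Z$, and the converse collapses. The repair is cheap --- replace $M_j$ by $\mathrm{lcm}(M_j,\,j!)$ and refine $U_j$ into residue classes modulo the new modulus, which changes neither the set $U_j$ nor $d(U_j)$ --- but it must be stated, since the whole converse rests on the constructed sequence being u.d. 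With that amendment the bookkeeping you flag as the crux does close: choose $L_j$ inductively so that $\ell_j\ge j\,N_{j-1}$ and $N_j\ge j\,M_{j+1}$ (both achievable since $M_{j+1}$ is fixed before $L_j$ is chosen), note that a partial block distorts the residue count modulo $m$ by at most one incomplete sweep, i.e.\ by $O(M_J)=o(N)$, and conclude $\dH(A(T;\{s_n\}))\ge\limsup_j d(U_j)\ell_j/N_j=\ma(T)$, with equality by the covering bound.
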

It is well-known that the uniform distribution property, introduced by H. Weyl \cite{Weyl} for sequences of real numbers in the unit interval, naturally extends to sequences on compact Hausdorff spaces and in topological groups (see e.g. \cite{DT, KN, SP}). In \cite{Tichy} the author provides a criterion for the uniform distribution of sequences in compact metric spaces. Let $(\MM, \rho, P)$  be a compact metric space, with metric $\rho$ and a Borel probability measure $P$. A sequence $\{x_n\}_{n\in\N}$ in $\MM$ is called
{\it Buck uniformly distributed} (for short B.u.d.) if and only if for every measurable set $H \subset \MM$ with $P(\partial H)=0$  we have $A(H, \{x_n\}) \in \D_\mu$ and $\mu(H, A(\{x_n\}))=P(H)$ (here $\partial H$ denotes as usual the boundary of $H$).

The following theorem, proved in \cite{Pasteka2}, is an analogue of Weyl's criterion for B.u.d. sequences $\{x_n\}_{n\in\mathbb{N}}\in \MM$. 

\begin{theorem}
A sequence $\{x_n\}_{n\in\mathbb{N}}$ in $\MM$ is Buck uniformly distributed if and only if
for every continuous real valued function $f$ defined on $\MM$ and for every sequence of positive integers $\{s_n\}_{n\in\N}$  uniformly distributed in $\Z$ we have

\begin{equation*}
\lim_{N \to \infty}\sum_{n=1}^N f(x_{s_n}) = \int_\MM f dP.
\end{equation*}
\end{theorem}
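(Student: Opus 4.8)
The plan is to prove the two implications separately, using Theorem~\ref{thm1} as the bridge between Buck measurability and densities along uniformly distributed sequences. Writing $A(H,\{x_n\})=\{k\in\N:x_k\in H\}$ and $A(S,\{s_n\})=\{n\in\N:s_n\in S\}$, the computational heart of both directions is the identity
\[
\frac1N\sum_{n=1}^{N}\chi_H(x_{s_n})=\frac{\#\{\,n\le N:\ n\in A\big(A(H,\{x_n\}),\{s_n\}\big)\,\}}{N},
\]
whose limit as $N\to\infty$, when it exists, is the asymptotic density $d\big(A(A(H,\{x_n\}),\{s_n\})\big)$. For the \emph{necessity} part I assume $\{x_n\}$ is B.u.d.\ and first take $f=\chi_H$ with $P(\partial H)=0$. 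By definition $A(H,\{x_n\})\in\D_\mu$ with $\mu(A(H,\{x_n\}))=P(H)$, so the forward direction of Theorem~\ref{thm1} gives $d(A(A(H,\{x_n\}),\{s_n\}))=\ma(A(H,\{x_n\}))=P(H)$ for every u.d.\ $\{s_n\}$, which is exactly the claimed limit for an indicator. To reach a general continuous $f$ with $|f|\le M$, I would choose a partition $-M=t_0<\dots<t_r=M$ of mesh $<\varepsilon$ avoiding the at most countably many levels $t$ with $P(f^{-1}\{t\})>0$; then each $H_j=f^{-1}([t_{j-1},t_j))$ satisfies $\partial H_j\subset f^{-1}\{t_{j-1}\}\cup f^{-1}\{t_j\}$, so $P(\partial H_j)=0$, and sandwiching $f$ between the step functions $\sum_j t_{j-1}\chi_{H_j}\le f\le\sum_j t_j\chi_{H_j}$ (whose integrals differ by $<\varepsilon$) and letting $\varepsilon\to0$ yields the limit for $f$.

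For \emph{sufficiency} I assume the limit holds for all continuous $f$ and all u.d.\ $\{s_n\}$, fix $H$ with $P(\partial H)=0$, and set $S=A(H,\{x_n\})$. Since $P(H^\circ)=P(H)=P(\overline H)$, the metric structure of $\MM$ lets me build continuous functions $g_k=\min(1,k\,\rho(\cdot,\MM\setminus H^\circ))\uparrow\chi_{H^\circ}$ and $h_k=\max(0,1-k\,\rho(\cdot,\overline H))\downarrow\chi_{\overline H}$; for a given $\varepsilon>0$ some pair $g\le\chi_H\le h$ satisfies $\int g\,dP>P(H)-\varepsilon$ and $\int h\,dP<P(H)+\varepsilon$. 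Applying the hypothesis to $g$ and $h$ and squeezing shows that $d(A(S,\{s_n\}))$ exists and equals $P(H)$ for \emph{every} u.d.\ sequence $\{s_n\}$. To invoke the reverse direction of Theorem~\ref{thm1} I must identify this common value with $\ma(S)$. One inequality is elementary: if $S\subset\bigcup_{k}a_k+(m_k)$ then $A(S,\{s_n\})\subset\bigcup_k A(a_k+(m_k),\{s_n\})$ and each summand has density $1/m_k$, so $\dH(A(S,\{s_n\}))\le\sum_k 1/m_k$, and taking the infimum over covers gives $\dH(A(S,\{s_n\}))\le\ma(S)$, hence $P(H)\le\ma(S)$. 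The reverse inequality $\ma(S)\le P(H)$, discussed below, then gives $\ma(S)=P(H)=d(A(S,\{s_n\}))$ for every u.d.\ $\{s_n\}$, so Theorem~\ref{thm1} yields $S\in\D_\mu$ and $\mu(S)=\ma(S)=P(H)$, i.e.\ $\{x_n\}$ is B.u.d.

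The main obstacle is precisely the reverse inequality $\ma(S)\le P(H)$ needed to match the hypothesis of Theorem~\ref{thm1} exactly. The covering argument only bounds the upper density along each fixed u.d.\ sequence from above by $\ma(S)$; to push it the other way I would use the dual description $\ma(S)=\sup\{\dH(A(S,\{s_n\})):\{s_n\}\text{ u.d.\ in }\Z\}$, so that the uniform value $P(H)$ of all these upper densities forces $\ma(S)\le P(H)$. Establishing this supremum formula—or citing it from the theory of Buck density in \cite{Pasteka}—amounts to constructing uniformly distributed sequences whose behaviour on $S$ realizes an almost optimal covering, and this is the delicate ingredient on which the whole sufficiency argument rests.
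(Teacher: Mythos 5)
The paper does not actually prove this theorem --- it is quoted from \cite{Pasteka2} --- so there is no in-text proof to compare against line by line; I can only judge your argument on its own terms. Your overall architecture is the right one and is essentially the standard Weyl-criterion argument transported through Theorem~\ref{thm1}: the necessity direction (indicators of $P$-continuity sets via Theorem~\ref{thm1}, then two-sided approximation of a continuous $f$ by step functions over level sets $f^{-1}([t_{j-1},t_j))$ with levels chosen off the countable set of atoms of $f_*P$) is complete and correct, and the first half of the sufficiency direction (Urysohn-type functions $g\le\chi_H\le h$ squeezing $\frac1N\#\{n\le N: x_{s_n}\in H\}$ to $P(H)$ for every u.d.\ $\{s_n\}$, using $P(H^\circ)=P(\overline H)=P(H)$) is also correct. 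You are likewise right that taking $s_n=n$ or the covering bound $\dH(A(S,\{s_n\}))\le\ma(S)$ only yields $P(H)\le\ma(S)$, and that the whole weight of the sufficiency direction rests on the reverse inequality $\ma(S)\le P(H)$.

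That last step is the one place where your proof is not self-contained. The dual formula you invoke, $\ma(S)=\sup\{\dH(A(S,\{s_n\})):\{s_n\}\ \text{u.d.\ in}\ \Z\}$ (in fact a maximum, attained by an explicitly constructed ``extremal'' u.d.\ sequence), is a genuine theorem of Pa\v{s}t\'eka and not something that follows from the definitions by soft means: one cannot use a single periodic sequence hitting each residue class mod $n!$ that meets $S$, because such a sequence fails to be u.d.\ for moduli not dividing $n!$, and the actual construction requires a diagonal blending across the levels $n!$. Since this same fact underlies the proof of Theorem~\ref{thm1} itself in \cite{Pasteka}, citing it is legitimate and closes the argument; but as written your proof has a dependency on an unproved (though true and referenced) external result, and you should state it as a precise lemma with a citation rather than as a ``dual description.'' One cosmetic point: the displayed limit in the statement is missing the normalization $\frac1N$, which you silently and correctly restore.
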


In the same paper, the author proves the existence of B.u.d. sequences in $\MM$.

\section{General results}
Let $\XX$ be an arbitrary set. For a fixed $n\in\mathbb{N}$, we denote by $\E_n =\{A_1^{(n)}, \dots,A_{k_n}^{(n)} \}$ a system of disjoint decompositions of $\XX$, i.e. $A_i^{(n)}\cap A_j^{(n)}=\emptyset$ and $\cup_{i=1}^{k_n}A_i^{(n)}=\XX$. Each system of decompositions satisfies the following conditions extending the properties of arithmetic progressions:

\begin{itemize}
\item [({\bf i})] For every family of sets $A_{h_1}^{(j_1)},\dots,A_{h_m}^{(j_m)}$ there exists an $s$ such that each of these sets is a union of sets belonging to $\E_s$.

\item [({\bf ii})] If $\{h_n\}$ is an arbitrary sequence of indices, then the intersection $\bigcap_{n=1}^\infty A_{h_n}^{(n)}$ contains at most $1$ element.
\end{itemize}

Let us denote by $\D_0$ the system of all sets of the form $A_{h_1}^{(j_1)}\cup...\cup A_{h_m}^{(j_m)}$. Condition ({\bf i}) assures that $\D_0$ is an algebra of sets and let $\Delta :\D_0 \to [0,1]$ be a finitely additive probability measure defined on $\D_0$. The set function

\begin{equation*}
\na(S) = \inf \{\Delta(A); A \in \D_0, S \subset A \}
\end{equation*}
will be called the {\it measure density} of the set $S$.
Let us remark that this is the standard way of constructing the outer measure $\nu^*$ starting from the finitely additive measure $\Delta$. In particular, the following result holds.

\begin{theorem}\label{thm1}
Let $\{c_n\}_{n\in\mathbb{N}}$ be a sequence in $\N$ such that for every $A \in \D_0$,
there exists $n_0$ such that $A$ is a union of sets from $\E_{c_n}$, for $n \ge n_0$. Then for arbitrary
$S \subset \XX$ we have

\begin{equation*}
\na(S) = \lim_{n \to \infty} \sum_{S \cap A_j^{(c_n)}\neq \emptyset} \Delta(A_j^{(c_n)})\ .
\end{equation*}
Moreover, if a set $S \subset \XX$ has non-empty intersection with every set from
$\E_n, n=1,2,\dots$, then $\na(S)=1$.
\end{theorem}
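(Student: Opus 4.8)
The plan is to identify, for each $n$, the minimal cover of $S$ by blocks of the partition $\E_{c_n}$ and to show that its $\Delta$-measure squeezes down exactly to $\na(S)$. Concretely, I would set
\[
B_n = \bigcup_{S \cap A_j^{(c_n)} \neq \emptyset} A_j^{(c_n)}.
\]
Since each $\E_{c_n}$ is a finite decomposition, $B_n$ is a finite union of blocks, hence $B_n \in \D_0$, and clearly $S \subset B_n$. By finite additivity of $\Delta$ on the pairwise disjoint blocks, $\Delta(B_n) = \sum_{S \cap A_j^{(c_n)} \neq \emptyset} \Delta(A_j^{(c_n)})$, which is precisely the sum appearing in the statement. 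Because $B_n$ is an admissible cover of $S$ in the definition of $\na$, I immediately get $\na(S) \le \Delta(B_n)$ for every $n$, and therefore $\na(S) \le \liminf_{n \to \infty} \Delta(B_n)$.

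For the reverse inequality I would take an arbitrary $A \in \D_0$ with $S \subset A$ and show $\Delta(B_n) \le \Delta(A)$ for all large $n$. Here the hypothesis on $\{c_n\}$ enters: choose $n_0$ so that for $n \ge n_0$ the set $A$ is a union of blocks of $\E_{c_n}$. The key elementary observation is that, since the blocks of $\E_{c_n}$ are pairwise disjoint, any block is either contained in $A$ or disjoint from $A$; and as $S \subset A$, every block meeting $S$ must meet $A$ and hence be contained in $A$. Consequently $B_n \subset A$ for $n \ge n_0$. Finitely additive measures are monotone on the algebra $\D_0$ (write $A = B_n \cup (A \setminus B_n)$ as a disjoint union in $\D_0$), so $\Delta(B_n) \le \Delta(A)$ for $n \ge n_0$, whence $\limsup_{n \to \infty} \Delta(B_n) \le \Delta(A)$.

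Taking the infimum over all such $A$ then gives $\limsup_{n \to \infty} \Delta(B_n) \le \na(S)$. Combined with the first paragraph, this yields $\limsup_n \Delta(B_n) \le \na(S) \le \liminf_n \Delta(B_n)$; since $\liminf \le \limsup$ always holds, all quantities coincide, the limit exists, and it equals $\na(S)$, proving the main identity. For the "moreover" statement, if $S$ meets every block of every $\E_n$, then in particular it meets every block of each $\E_{c_n}$, so no block is discarded and $B_n = \XX$; hence $\Delta(B_n) = \Delta(\XX) = 1$ for all $n$, and the limit formula gives $\na(S) = 1$.

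I expect the only genuinely substantive point to be the containment $B_n \subset A$: everything rests on the fact that once $\E_{c_n}$ refines $A$, a block cannot straddle the boundary of $A$, so meeting $S \subset A$ forces it inside $A$. Note that condition ({\bf ii}) is not needed for this statement; it is the refinement property packaged into the hypothesis on $\{c_n\}$ (made possible by condition ({\bf i})), together with monotonicity and finite additivity of $\Delta$, that does all the work.
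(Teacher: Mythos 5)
Your proof is correct: the squeeze between $\na(S)\le\Delta(B_n)$ (since $B_n\in\D_0$ covers $S$) and $\Delta(B_n)\le\Delta(A)$ for $n\ge n_0$ whenever $A\in\D_0$ refines into $\E_{c_n}$-blocks and contains $S$ is exactly the right argument, and the key step — that a block of $\E_{c_n}$ meeting $S\subset A$ must lie entirely inside $A$ once $\E_{c_n}$ refines $A$ — is identified and justified. The paper gives no proof of this theorem (it is asserted as a standard consequence of the outer-measure construction), and your argument is the standard one that the authors evidently have in mind, so there is nothing to contrast.
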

A set $S \subset \XX$ is called {\it $\na$-measurable} if and only if $\na(S)+ \na(\XX \setminus S) = 1$.
We denote the Carath\'eodory extension of $\D_0$ by $\D_\nu$. By definition it is the system of all $\na$-measurable subsets of $\XX$.\\
It follows from general measure theory that the system $\D_\nu$ is an algebra of sets and the restriction $\nu = \na|_{\D_\nu}$ is a finitely additive probability measure on $\D_\nu$ (see e.g. \cite{Halmos}).\\

Now we provide some examples of systems of decompositions and related systems $\D_\nu$ of $\na$-measurable subsets.

\begin{example}
Take $\XX=\N$ and $\E_n=\{A_1^{(n)},\dots, A_{n!}^{(n)} \}, n \in \N$,
where $A_j^{(n)}= j-1 +(n!), j=1,2,\dots,n!$. Then $\na$ is the Buck measure density defined in \cite{Buck}.
\end{example}
\begin{example}\label{example2}
Again let $\XX=\N$. Consider the system of decompositions
$\E_n=\{A_1^{(n)},\dots, A_{n}^{(n)} \}, n \in \N$.
where $A_1^{(n)}= \N \setminus \{1,\dots, n-1\}, A_j^{(n)}=\{j-1\}$, $j=2,\dots,n$.
In this case $\D_0$ consists of all subsets of $\N$ which are finite or have finite complement. Let $S \in \D_0$, put
$\Delta(S)=1$ if $S$ is infinite and $\Delta(S)=0$ for $S$ finite. In this case $\na(A)=1$ if and only if $A$ is an infinite set, and the system
$\D_\nu$ coincides with $\D_0$.
\end{example}

\begin{example}\label{example3}
Let $\XX = [0,1) \cap \Q$, and $\E_n=\{[\frac{k-1}{n},\frac{k}{n})\cap \Q,\ k=1,\dots,n \}, n \in \N$ and
$\Delta([\frac{k-1}{n},\frac{k}{n})\cap \Q)= \frac{1}{n}$. Then $\na$ is the Jordan upper measure defined on the system of subsets
of $\XX$. Here $\D_\nu$ is the system of Jordan measurable sets.
\end{example}

It is well-known that for every compact group there exists a probability measure defined on the system of its Borel subsets, invariant
with respect the group operation (the normalized Haar measure, see for instance \cite{Halmos, Hewitt_Ross}). In \cite{FPTW} the authors study certain finitely additive measures on topological groups and rings.
\begin{example}
Let us first assume that $\XX=\G$ is an infinite multiplicative locally compact abelian group and $\H$ a subgroup of finite index. Then the quotient $\G / \H$ is a finite group. Denote by $\Delta^\ast$ the normalized Haar measure measure on $\G$. Since it is invariant with respect the group operation we have

\begin{equation*}
\Delta^\ast(\H) = \frac{1}{|\G/\H|}\ ,
\end{equation*}
where  $|X|$ denotes the cardinality of $X$.\\
Let $S=\{\H_n; n=1,2,\dots\}$ be a system of subgroups of $\G$ of finite index such that $\H_i\cap \H_j=\H_k$, for every $i,j,k$ and $\cap_{n=1}^\infty H_n =\{e\}$, where $e$ is the neutral element of $\G$. Thus for every $n$ we have a finite decomposition

\begin{equation*}
\E_n =\{a_1^{(n)}H_n, \dots,a_{k_n}^{(n)}H_n \} , \  k_n =|\G / \H_n|.
\end{equation*}
The system $\D_0$ consists of all sets of the form $g_1\H_n\cup \dots\cup g_k\H_n$, $g_i \in \G$, $n=1,2,\dots$. Let $\Delta$ be the restriction of $\Delta^\ast$ to $\D_0$ and $\na$  the corresponding measure density.\\

Let $\G$ be the free abelian group with countable set of generators
$\{p_1, p_2, \dots\}.$ Let $\H_n$, $n=1,2,\dots$, be the subgroups generated by $\{p_1^n, p_2^n,\dots, p_n^n, p_{n+1},p_{n+2},\dots\}$. Since every element of $\G$ can be written as product of a finite number of generators, we get the following disjoint decomposition

\begin{equation*}
\G= \bigcup_{0\le j_i < n} p_1^{j_1}p_2^{j_2} \dots p_n^{j_n}\H_n.
\end{equation*}
Thus $\G / \H_n$ contains $n^n$ elements and therefore $\Delta(a\H_n) = \frac{1}{n^n}$, for $a \in \G$.\\

In particular, if $\G=\Q^\ast$ is the multiplicative group of positive rational numbers, then it can be considered as the free abelian group generated by all primes. In this case, the measurability is not compatible with the natural order relation on $\Q$. The inclusion

\begin{equation*}
(0,1]\cap \Q^+ \subset \cup_{i=1}^m a_i \H_n
\end{equation*}
implies that the numbers $a_i$ take all the values $p_1^{j_1}p_2^{j_2} \dots p_n^{j_n}, 0\le j_i <n$, thus $\na((0,1]\cap \Q^+)=1$.
Analogously, we can show that $\na((1, \infty)\cap \Q^+)=1$. Thus these sets are not measurable.
\end{example}

\begin{example}
If $\G =\prod_{j=1}^\infty \G_j$ is the direct product of finite groups, then we can take $\H_n=\prod_{j=n+1}^\infty \G_j$ and in this case

\begin{equation*}
\Delta(a\H_n)= \frac{1}{|G_1\cdot...\cdot G_n|}, \ a \in \G\ .
\end{equation*}
\end{example}

Let us return to the general setting. We will start by constructing a compact metric space containing $\XX$ as dense subset. Then we define
a Borel probability measure induced by $\Delta$.\\

First, we define the metric on $\XX$ based on the system of decompositions $\E_n, n=1,2,\dots$.
Let $x,y \in \XX$ and put $\psi_n(x,y)=0$ if $x,y$ belong to the same set of $\E_n$, and
$\psi_n(x,y)=1$ otherwise (for $n=1,2,\dots$). Define

\begin{equation*}
\rho(x,y)=\sum_{n=1}^\infty \frac{\psi_n(x,y)}{2^n}\ ,
\end{equation*}
and $\rho$ is a metric on $\XX$.
In particular, 
\begin{equation}\label{1}
\rho(x,y) \le \frac{1}{2^N}
\end{equation}
if and only if $x,y$ belong to the same set of every decomposition $\E_n, n=1,\dots,N$. From condition ({\bf i}) it follows that a sequence $\{x_n\}_{n\in\mathbb{N}}$ of elements in $\XX$ converges to an element
$x \in \XX$ if and only if for every $s=1,2,\dots$ there exists $n_0$ such that for every $n \ge n_0$ the elements $x_n$ and $x$ belong to the same set of $\E_s$.\\
Similarly, one can define the concept of Cauchy sequence which leads to the completion of $\XX$ in the usual way.
Let $\bar{\XX}$ be the completion of the metric space $(\XX, \rho)$ and for $S \subset \bar{\XX}$ let $\bar{S}$ be its closure in $\bar{\XX}$. Then, clearly

\begin{equation*}
\bar{\XX} = \bar{A}_1^{(n)}\cup \dots\cup \bar{A}_{k_n}^{(n)}
\end{equation*}
for $n=1,2,\dots$. Since a sequence of elements of $\XX$ is defined to be fundamental if and only if for every $s=1,2,\dots$ there exists $n_0$ such that for $m, n \ge n_0$ the elements $x_m$ and $x_n$ belong to the same set of $\E_s$, then the sets $\bar{A_1^{(n)}},\dots, \bar{A}_{k_n}^{(n)}$,$n=1,2,\dots$ are disjoint. Thus they are open and closed and, by condition ({\bf i}) and inequality \eqref{1}, it follows that for every $N=1,2,\dots$ there exists a finite
$\frac{1}{2^N}$-net. This shows that the metric space $\bar{\XX}$ is compact. 

We construct a $\sigma-$additive Borel probability measure on $\bar{\XX}$. The compactness of $\bar{X}$ implies that the extension of $\Delta$ to sets of the form $\{\bar{A}; A\in D_0\}$ which are open and closed is a $\sigma$-additive probability measure, since $\Delta(\bar{A})=\Delta(A)$. Then

\begin{equation*}
P^\ast(B)= \inf\left\{\sum_{n=1}^\infty \Delta(\bar{A}_n); B \subset \bigcup_{n=1}^\infty \bar{A}_n,\ \bar{A}_n \in {\mathcal S} \right\}
\end{equation*}
is an outer measure on $\bar{\XX}$ and ${\mathcal S}_{P^\ast}= \{B; P^\ast(B)+P^\ast(\bar{\XX} \setminus B)=1 \}$, the system of $P^\ast$-measurable sets, is a $\sigma$-algebra. Therefore, the restriction $P$ of $P^\ast$ on ${\mathcal S}_{P^\ast}$ is a
$\sigma$-additive probability measure on ${\mathcal S}_{P^\ast}$.
Moreover, since ${P^\ast}$ is, by definition, an outer measure, ${\mathcal S}_{P^\ast}$ contains all open sets. 
Thus $P$ is a Borel probability measure on the compact metric space $\bar{\XX}$.\\

Following the usual procedure, we have a compact metric space and a Borel probability measure defined on it. We can introduce a suitable definition of uniform distribution of a sequence $\{\alpha_n\}_{n\in\mathbb{N}}$ in $\bar{\XX}$ with respect to $P$, namely Buck uniform distribution.\\
Since the set $\XX$ is dense in its completion, there exists a sequence $\{x_n\}_{n\in\mathbb{N}}$ in $\XX$ such that $\lim_{n\to \infty}\rho(x_n, \alpha_n)=0$. Since every continuous function on $\bar{\XX}$ is uniformly continuous, $\{x_n\}_{n\in\mathbb{N}}$ is also a B.u.d. sequence. Considering a set $C$ with $\bar C\in\mathcal S$ and $\partial\bar{C}=\emptyset$, yealds $A(C, \{x_n\}) \in \D_\mu$ and

\begin{equation*}
\mu(A(C,\{x_n\})) = \Delta(C).
\end{equation*}
A sequence of elements of $\XX$ fulfilling this condition will be called {\it $\na$-B.u.d.}. Moreover, it is easy to see that for every $S\in\D_\nu$ the set $A(S, \{x_n\})$ is measurable in sense of Buck and

\begin{equation*}
\mu(A(S,\{x_n\}))=\nu(S).
\end{equation*}
Thus by Theorem \ref{thm1} we have

\begin{equation*}
\nu(S) = \lim_{N \to \infty} \frac{1}{N} \left|\left\{n \le N ; x_{s_n} \in S \right\}\right|
\end{equation*}
for $S \in \D_\nu$ and $\{s_n\}$ a sequence of positive integers u.d.\ in $\Z$. Therefore the measure density can be represented in certain sense as "limit" density.

Consider now a uniformly continuous function $f : \XX \to [0,1]$ and a B.u.d.\ sequence $\{x_n\}_{n\in\mathbb{N}}$ in $\XX$. Then for every
real valued continuous function $g$ defined on $[0,1]$ we have
\begin{equation}
\lim_{N \to \infty}\frac{1}{N}\sum_{n=1}^N g(f(x_{k_n})) = \int f \circ g\ dP,
\end{equation}
where $\{k_n\}$ is an arbitrary sequence of positive integers u.d.\ in $\Z$. \\

More generally, let $\YY\neq \emptyset$ be a set and $\la $ be a pre-measure defined on the ring of all subsets of $\YY$. Assume further that $\la$ is a strong submeasure on $\D_\ell$ (a strong subadditive pre-measure, see\cite{Sipos}), i.e. 
\begin{equation*}
\la(A\cap B)+\la(A\cup B)\leq \la(A)+\la(B)\ ,
\end{equation*}
with $\la(\YY)=1, \la(\emptyset)=0$. Consider $\D_\ell$,
the system of all $C \subset \YY$ such that $\la(C) + \la(\YY \setminus C) =1$. Clearly, $\D_\ell$ is a set algebra and the restriction $\ell = \la|_{\D_\ell}$ is a finitely additive probability measure on $\D_\ell$.

Following the proofs from \cite{Pasteka3, Pasteka} we can derive the following result.

\begin{theorem}\label{thm7}
Let $g:\XX \to \YY$ be a bijective mapping. The following statements are equivalent
\begin{enumerate}
\item $g(S) \in \D_\ell \land \ell(g(S))=\nu(S)$, for all $S \in \D_\nu$; 
\item $\la(g(A)) \le \Delta(A)$, for all $A \in \D_0$;
\item $ì\la(g(B)) \le \na(B)$, for all $B\subset \XX$.
\end{enumerate}
\end{theorem}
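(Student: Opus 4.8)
The plan is to establish the three statements as a cycle $(1)\Rightarrow(2)\Rightarrow(3)\Rightarrow(1)$. Two elementary facts will be used repeatedly. First, because $g$ is a bijection it commutes with all Boolean operations; in particular $g$ preserves inclusions and $g(\XX\setminus S)=\YY\setminus g(S)$. Second, because $\la$ is a submeasure it is monotone, and because it is a \emph{strong} submeasure with $\la(\emptyset)=0$ it is subadditive: from $\la(A\cap B)+\la(A\cup B)\le\la(A)+\la(B)$ and $\la\ge 0$ one gets $\la(A\cup B)\le\la(A)+\la(B)$.

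For $(1)\Rightarrow(2)$ I would first record that $\D_0\subset\D_\nu$: for $A\in\D_0$ one has $\na(A)=\Delta(A)$ and $\na(\XX\setminus A)=\Delta(\XX\setminus A)=1-\Delta(A)$, so $\na(A)+\na(\XX\setminus A)=1$, whence $A$ is $\na$-measurable with $\nu(A)=\Delta(A)$. Applying (1) to $S=A$ gives $g(A)\in\D_\ell$ and $\la(g(A))=\ell(g(A))=\nu(A)=\Delta(A)$, which yields (2) (indeed with equality). For $(2)\Rightarrow(3)$, fix any $B\subset\XX$ and any $A\in\D_0$ with $B\subset A$; then $g(B)\subset g(A)$, so monotonicity of $\la$ and (2) give $\la(g(B))\le\la(g(A))\le\Delta(A)$. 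Taking the infimum over all such $A$ and invoking the definition $\na(B)=\inf\{\Delta(A):A\in\D_0,\,B\subset A\}$ gives $\la(g(B))\le\na(B)$, i.e. (3).

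The substance lies in $(3)\Rightarrow(1)$. Let $S\in\D_\nu$, so $\na(S)+\na(\XX\setminus S)=1$ and $\nu(S)=\na(S)$. Applying (3) to both $S$ and $\XX\setminus S$, and using $g(\XX\setminus S)=\YY\setminus g(S)$, produces the two one-sided bounds $\la(g(S))\le\na(S)=\nu(S)$ and $\la(\YY\setminus g(S))\le\na(\XX\setminus S)=1-\nu(S)$. Adding them gives $\la(g(S))+\la(\YY\setminus g(S))\le 1$, while subadditivity gives the reverse inequality $1=\la(\YY)\le\la(g(S))+\la(\YY\setminus g(S))$. Hence equality holds, so $g(S)\in\D_\ell$. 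Finally, $\la(g(S))+\la(\YY\setminus g(S))=1$ together with $\la(\YY\setminus g(S))\le 1-\nu(S)$ forces $\la(g(S))\ge\nu(S)$, which combined with the first bound yields $\ell(g(S))=\la(g(S))=\nu(S)$, completing (1).

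The only genuine obstacle is the step in $(3)\Rightarrow(1)$ where the two one-sided inequalities coming from (3) must be upgraded to the measurability identity $\la(g(S))+\la(\YY\setminus g(S))=1$; this is precisely where the strong submeasure hypothesis enters, since without the lower bound $\la(g(S))+\la(\YY\setminus g(S))\ge\la(\YY)$ supplied by subadditivity one could conclude neither $g(S)\in\D_\ell$ nor the reverse inequality $\la(g(S))\ge\nu(S)$. All remaining steps are bookkeeping with the definitions of $\na$ and $\D_0\subset\D_\nu$ and with the fact that a bijection respects complements and inclusions.
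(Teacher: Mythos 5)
Your proof is correct. The paper itself gives no argument for this theorem (it only points to the proofs in the cited references of Pa\v{s}t\'eka), and your cyclic scheme $(1)\Rightarrow(2)\Rightarrow(3)\Rightarrow(1)$ is exactly the standard route there: $(1)\Rightarrow(2)$ via $\D_0\subset\D_\nu$ with $\nu|_{\D_0}=\Delta$, $(2)\Rightarrow(3)$ by monotonicity of $\la$ and the infimum defining $\na$, and $(3)\Rightarrow(1)$ by applying $(3)$ to $S$ and $\XX\setminus S$ and closing the gap with the subadditivity inequality $1=\la(\YY)\le\la(g(S))+\la(\YY\setminus g(S))$. You also correctly isolate the one point that needs care, namely that monotonicity of $\la$ is part of the pre-measure/submeasure hypothesis rather than a consequence of strong subadditivity.
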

Analogously we can extend the concept of B.u.d.\ for this case.
A sequence $\{x_n\}_{n\in\mathbb{N}}$ is said to be $\la$-{\it B.u.d.}\ if and only it for every $B \in \D_\ell$ we have
$A(B, \{x_n\}) \in \D_\mu$ and $\mu(A(B, \{x_n\}))=\ell(B)$.

In particular the following result holds.

\begin{theorem}\label{thm8}
If $g:\XX \to \YY$ is a bijective mapping fulfilling condition (1) of Theorem \ref{thm7} and $\{x_n \}$ is a $\la$-B.u.d.\ sequence of elements in $\YY$ then $g^{-1}(x_n)$ is a $\na$-B.u.d. sequence of elements in $\XX$.
\end{theorem}

For every bijection $g$, all $x \in \XX$ and $S \subset \XX$, we have
$g^{-1}(x) \in S \Leftrightarrow x \in g(S)$. Thus for each sequence $\{x_n\}_{n\in\mathbb{N}}$ we have $A(S,\{g^{-1}(x_n)\}) = A(g(S), \{x_n\})$. This yields

\begin{corollary}\label{corollary1}
If $g$ preserves measure density then for every $\na$-B.u.d.\ sequence $\{x_n\}_{n\in\mathbb{N}}$ the sequence $\{g^{-1}(x_n)\}_{n\in\mathbb{N}}$ is also $\na$-B.u.d..
\end{corollary}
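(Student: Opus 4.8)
The plan is to recognise this corollary as the special case $\YY=\XX$, $\la=\na$, $\ell=\nu$ of Theorem~\ref{thm8}, and then to spell out the short direct argument behind it. I read ``$g$ preserves measure density'' as condition (1) of Theorem~\ref{thm7} in this case, i.e.\ $g(S)\in\D_\nu$ and $\nu(g(S))=\nu(S)$ for every $S\in\D_\nu$ (equivalently, by Theorem~\ref{thm7}, as $\na(g(B))\le\na(B)$ for all $B\subset\XX$). The single structural fact I would use is the set identity already established just above the statement: since $g$ is a bijection, $g^{-1}(x)\in S\iff x\in g(S)$, and hence for every sequence $\{x_n\}_{n\in\N}$ in $\XX$
\begin{equation*}
A(S,\{g^{-1}(x_n)\}) = A(g(S),\{x_n\}).
\end{equation*}

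First I would fix an arbitrary $S\in\D_\nu$ and unwind the definition of $\na$-B.u.d.\ for the sequence $\{g^{-1}(x_n)\}$: I must exhibit that $A(S,\{g^{-1}(x_n)\})\in\D_\mu$ and that $\mu(A(S,\{g^{-1}(x_n)\}))=\nu(S)$. Because $g$ preserves measure density, the image $g(S)$ again lies in $\D_\nu$ with $\nu(g(S))=\nu(S)$. Next I would apply the hypothesis that $\{x_n\}$ is $\na$-B.u.d.\ to this set $g(S)\in\D_\nu$, which gives $A(g(S),\{x_n\})\in\D_\mu$ and $\mu(A(g(S),\{x_n\}))=\nu(g(S))=\nu(S)$. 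Substituting the set identity displayed above then yields $A(S,\{g^{-1}(x_n)\})\in\D_\mu$ with $\mu$-value $\nu(S)$. Since $S\in\D_\nu$ was arbitrary, $\{g^{-1}(x_n)\}$ is $\na$-B.u.d., which is the assertion.

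The only step requiring genuine care — hence the main obstacle — is the transfer $S\in\D_\nu\Rightarrow g(S)\in\D_\nu$ with $\nu(g(S))=\nu(S)$, when ``preserves measure density'' is used only in the weaker inequality form $\na(g(B))\le\na(B)$. I would argue it as follows: applying the inequality to both $S$ and $\XX\setminus S$ and using the bijectivity identity $\XX\setminus g(S)=g(\XX\setminus S)$ gives
\begin{equation*}
\na(g(S))+\na(\XX\setminus g(S))\le\na(S)+\na(\XX\setminus S)=1,
\end{equation*}
while subadditivity of the outer measure $\na$ forces $\na(g(S))+\na(\XX\setminus g(S))\ge\na(\XX)=1$. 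Equality throughout shows $g(S)\in\D_\nu$, and comparing the two bounds term by term yields $\na(g(S))=\na(S)$, that is $\nu(g(S))=\nu(S)$. This is precisely the implication (3)$\Rightarrow$(1) of Theorem~\ref{thm7}, so once that theorem is invoked the corollary collapses to the substitution carried out in the previous paragraph, with no further analytic input.
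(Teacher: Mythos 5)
Your proof is correct and follows essentially the same route as the paper: the paper derives Corollary \ref{corollary1} from the identity $A(S,\{g^{-1}(x_n)\}) = A(g(S),\{x_n\})$ together with Theorem \ref{thm8} (equivalently, the definition of $\na$-B.u.d.\ applied to $g(S)$), which is exactly your substitution argument. Your extra paragraph verifying that the inequality form $\na(g(B))\le\na(B)$ implies $g(S)\in\D_\nu$ and $\nu(g(S))=\nu(S)$ is a correct filling-in of the implication (3)$\Rightarrow$(1) of Theorem \ref{thm7}, which the paper simply cites.
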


\begin{remark}
Functions $g^{-1}$ such that the sequence $\{g^{-1}(x_n)\}_{n\in\mathbb{N}}$ is u.d.\ for every u.d.\ sequence $\{x_n\}_{n\in\mathbb{N}}$ are called uniform distribution preserving mappings (for short: u.d.p.\ mappings). They are of particular interest since they are maps generating u.d. sequences for every u.d.\ sequence $\{x_n\}_{n\in\mathbb{N}}$. In \cite{Tichy_Winkler} the authors establish general criteria on u.d.p.\ transformations on compact metric spaces and a full characterization of u.d.p. maps on $[0,1]$.
\end{remark}
Therefore, in view of Theorem \ref{thm7} we have the following

\begin{corollary}\label{corollary2}
Let $g$ be a bijection. Then the following statements are equivalent.
\begin{enumerate}
\item $g$ preserves the measure density;
\item $\na(g(A^{(n)}_i)) \le \Delta(A^{(n)}_i)$, for all $n \in \N$ and all $i \le k_n$; 
\item $\na(g(S)) \le \na(S)$, for all $S \subset \XX$.
\end{enumerate}
\end{corollary}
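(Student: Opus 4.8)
The plan is to recognize Corollary \ref{corollary2} as the specialization of Theorem \ref{thm7} to the case $\YY=\XX$, $\la=\na$, $\ell=\nu$ and $\D_\ell=\D_\nu$, and then to reduce the defining inequality on all of $\D_0$ to the inequality on the atoms of the individual decompositions. Under this identification, statement (1) of the corollary (``$g$ preserves the measure density'') is precisely statement (1) of Theorem \ref{thm7}, and statement (3) of the corollary is precisely statement (3) of Theorem \ref{thm7}. Consequently the equivalence $(1)\Leftrightarrow(3)$, together with the equivalence of both to the intermediate condition
\begin{equation}\label{thm7cond2}
\na(g(A)) \le \Delta(A) \quad \text{for all } A \in \D_0,
\end{equation}
is granted directly by Theorem \ref{thm7}. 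Thus the only genuine work is to insert statement (2) into this chain, i.e.\ to show that (2) is equivalent to \eqref{thm7cond2}.

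First I would dispose of the easy direction. Since each atom $A^{(n)}_i$ lies in $\D_0$, and $\na$ restricts to $\Delta$ on the algebra $\D_0$, we have $\na(A^{(n)}_i)=\Delta(A^{(n)}_i)$; hence \eqref{thm7cond2} immediately yields $\na(g(A^{(n)}_i)) \le \Delta(A^{(n)}_i)$, which is (2). (Equivalently, this already follows from statement (3) by monotonicity of $\na$, since $A^{(n)}_i\subset\XX$.)

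The substance lies in deriving \eqref{thm7cond2} from (2). Fix an arbitrary $A \in \D_0$, say $A = A^{(j_1)}_{h_1} \cup \dots \cup A^{(j_m)}_{h_m}$. By condition ({\bf i}) there is a single index $s$ for which each of these sets --- and hence $A$ itself --- is a union of sets from $\E_s$; since $\E_s$ is a disjoint decomposition of $\XX$, this exhibits $A$ as a finite disjoint union $A = A^{(s)}_{i_1} \cup \dots \cup A^{(s)}_{i_r}$ of atoms. Because $g$ is a bijection, $g(A) = \bigcup_{k=1}^r g(A^{(s)}_{i_k})$, and the finite subadditivity of the outer measure $\na$ together with hypothesis (2) gives
\begin{equation*}
\na(g(A)) \le \sum_{k=1}^r \na\big(g(A^{(s)}_{i_k})\big) \le \sum_{k=1}^r \Delta\big(A^{(s)}_{i_k}\big).
\end{equation*}
Finally, the finite additivity of $\Delta$ on the disjoint atoms gives $\sum_{k=1}^r \Delta(A^{(s)}_{i_k}) = \Delta(A)$, which is exactly \eqref{thm7cond2}.

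The main obstacle is organizational rather than technical: one must use condition ({\bf i}) to produce a \emph{common} refining decomposition $\E_s$ for the finitely many generators of $A$ simultaneously, so that $A$ becomes a disjoint union of atoms of one fixed level $s$. Once this common refinement is in hand, the estimate is a one-line application of subadditivity of $\na$ and additivity of $\Delta$, and all remaining equivalences are inherited verbatim from Theorem \ref{thm7}. In particular, no property of $g$ beyond bijectivity, and no appeal to the metric or compactification machinery, is required.
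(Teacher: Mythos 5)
Your proof is correct and follows essentially the same route as the paper, which states the corollary simply ``in view of Theorem \ref{thm7}'' with $\YY=\XX$, $\la=\na$, $\ell=\nu$. The bridging step you supply --- passing from the inequality on the atoms $A^{(n)}_i$ to all of $\D_0$ via a common refinement from condition ({\bf i}), finite subadditivity of $\na$, and finite additivity of $\Delta$ --- is exactly the detail the paper leaves implicit, and it is carried out correctly.
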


\begin{example}
Consider again $\XX = \G$ a locally compact abelian group.
The mapping $x \to x^{-1}$ defined on $G$ is a bijection fulfilling condition (2) of Corollary \ref{corollary2} by applying the decomposition
$A^{(n)}_j = a_jH_n$. Thus Corollary \ref{corollary1} implies that each sequence $\{x_n\}_{n\in\mathbb{N}}$ of elements of $\G$ is $\na$-B.u.d.\ sequence if and only if $\{x_n^{-1}\}$ is $\na$-B.u.d..\\
Analogously we can consider the mapping $x \to ax$, for a fixed $a \in G$. Then each sequence $\{x_n\}_{n\in\mathbb{N}}$ of elements of $\G$ is $\na$-B.u.d.\ sequence if and only if $\{ax_n\}$ is $\na$-B.u.d..
\end{example}
We now conclude this section with a theorem that can be considered as a generalization of the construction of Haar measure with the help of Kakutani's fixed point Theorem (see e.g. \cite{GD}).
\begin{theorem}
Let $g$ be a permutation defined on $\XX$ such that $g(S) \in \D_0$ for every $S \in \D_0$, where $\D_0$ is a countable $\sigma$-algebra of $\XX$. Then there exists a finite probability measure $\Delta$ such that for every $A \in \D_0$
\begin{equation*}
 \Delta(g(A)) = \Delta(A).
\end{equation*}
\end{theorem}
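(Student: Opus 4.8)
The plan is to realise $\Delta$ as a fixed point of the natural action of $g$ on the space of probability measures, exactly as in the Kakutani/Markov--Kakutani construction of Haar measure alluded to in the statement. Since the desired conclusion $\Delta(g(A))=\Delta(A)$ says precisely that $\Delta$ is invariant under the transformation induced by $g$, I would let $M$ denote the set of all finitely additive probability measures on $\D_0$ and obtain the invariant $\Delta$ as a fixed point of the (single) affine self-map of $M$ induced by $g$.

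First I would fix the topology. Identifying a set function with the family of its values, I regard $M$ as a subset of the product $[0,1]^{\D_0}$ equipped with the product topology; because $\D_0$ is countable, this space is compact by Tychonoff's theorem and metrizable. The subset $M$ is cut out by the conditions $\mu(\XX)=1$, $\mu(\emptyset)=0$, and $\mu(A\cup B)=\mu(A)+\mu(B)$ for disjoint $A,B\in\D_0$, each of which is a closed condition (a continuous equality between coordinate projections); hence $M$ is closed, therefore compact, and it is visibly convex. It is nonempty, since any Dirac measure $\delta_x$, $x\in\XX$, restricts to an element of $M$.

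Next I would introduce the operator. Using that $g$ is a bijection of $\XX$ with $g(A)\in\D_0$ for every $A\in\D_0$, I define $T:M\to M$ by $(T\mu)(A)=\mu(g(A))$. This avoids any appeal to $g^{-1}(\D_0)\subseteq\D_0$ and lands in $M$: surjectivity of $g$ gives $(T\mu)(\XX)=\mu(\XX)=1$, while injectivity of $g$ gives $g(A)\cap g(B)=\emptyset$ and $g(A\cup B)=g(A)\cup g(B)$ for disjoint $A,B$, so $T\mu$ is again finitely additive. The map $T$ is affine and continuous, because the $A$-th coordinate of $T\mu$ is simply the $g(A)$-th coordinate of $\mu$. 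A fixed point $\Delta=T\Delta$ is then exactly a measure with $\Delta(g(A))=\Delta(A)$ for all $A\in\D_0$, which is the assertion. To produce it I would run the averaging argument underlying Markov--Kakutani: starting from any $\mu_0\in M$, set $\mu_N=\frac1N\sum_{k=0}^{N-1}T^k\mu_0\in M$; then for every coordinate $A$ one has $|(T\mu_N)(A)-\mu_N(A)|=\frac1N|(T^N\mu_0)(A)-\mu_0(A)|\le\frac1N$, so $T\mu_N-\mu_N\to0$ in the product topology. By compactness and metrizability of $M$ a subsequence $\mu_{N_j}$ converges to some $\Delta\in M$, and continuity of $T$ yields $T\Delta=\lim_j T\mu_{N_j}=\lim_j\mu_{N_j}=\Delta$, as required.

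The main obstacle is the topological setup rather than the fixed-point step itself: one must choose a topology in which the space of probability measures is simultaneously compact, convex, and acted on continuously by $g$, and here the countability of $\D_0$ is precisely what makes $[0,1]^{\D_0}$ metrizable and licenses the sequential argument above. A secondary point worth addressing is the distinction between finite and countable additivity. The averaging argument delivers a finitely additive invariant probability measure, which already matches the statement; moreover, since a countable $\sigma$-algebra is necessarily finite, only finitely many disjoint nonempty sets occur and the resulting $\Delta$ is automatically $\sigma$-additive.
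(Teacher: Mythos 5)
Your proposal is correct and follows essentially the same route as the paper: both realise $\Delta$ as a fixed point of the operator $\mu\mapsto\mu\circ g$ on the convex set of finitely additive probability measures, equipped with the topology of pointwise convergence on the countable family $\D_0$ (your product topology on $[0,1]^{\D_0}$ is exactly this), with compactness supplied by countability and the fixed point produced by Ces\`aro averaging in the style of Markov--Kakutani. The only difference is cosmetic: you unwind the Markov--Kakutani argument explicitly (and helpfully note nonemptiness via Dirac measures), whereas the paper cites the fixed-point theorem directly.
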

\begin{proof}
Denote by $\mathcal{B}$ the set of all bounded real valued and finitely additive set functions defined on $\D_0$. Then $\mathcal{B}$ is a linear space. Let $\mathcal{R}$ be the subset of $\mathcal{B}$ consisting of all finitely additive probability measures. It is easy to check that $\mathcal{R}$ is convex set.
Define a topology on $\mathcal{B}$ by
\begin{equation*}
\Lambda_n \rightarrow \Lambda \quad\Leftrightarrow \quad\forall S\in\D_0\quad \lim_{n\to\infty} \Lambda_n(S)=\Lambda(S)\ .
\end{equation*}
Consider a sequence $\{\Delta_n\}$ of elements in $\mathcal{R}$. Since $\D_0$ is a countable set, we can iteratively select a sequence of indices $\{n_k\}$ such that $\{\Delta_{n_k}(S)\}$ converges for every $S\in\D_0$. Thus $\mathcal{R}$ is sequentially compact with respect to this topology.\\
Let us define a linear mapping $\tilde{g}\colon\mathcal{B}\rightarrow \mathcal{B}$, with $\tilde{g}(\Lambda)(S) = \Lambda(g(S))$. Then $\tilde{g}(\mathcal{R})\subset \mathcal{R}$ and $\tilde{g}$ is continuous with respect to the topology under consideration.\\
Put
\begin{equation*}
\tilde{g}_n(\Delta)=\frac{1}{n}\sum_{k=1}^n \tilde{g}^n(\Delta)
\end{equation*}
for $\Delta\in \mathcal{R}$, $n = 1, 2, \dots$. Since $\mathcal{R}$ is sequentially compact, every countable centered system of closed sets has non empty intersection. Thus, by an application of Markov-Kakutani fixed point theorem, follows the assertion.
\end{proof}
The following example provides an explicit construction of a finite additive probability measure on the algebra $\D_0$.
\begin{example}
Let $\C$ be the set of all real-valued uniformly continuous functions defined on $\XX$. Since these functions are bounded we can define the norm
$$
||f||= \sup\{f(x);\ x \in \XX\},
$$
where $f \in \C$. It can be seen easily that $(\C, ||\cdot||)$ is a Banach space. 

Let $\C^\ast$ be the dual space to $\C$. Denote by ${\mathcal P}$ the set of all $\varphi \in \C^\ast$ that $\varphi(1)=1$ and
$\varphi(f) \ge 0$ for $f \ge 0$. Then $\Delta_{\varphi}(A)=\varphi(\chi_A), A \in \D_0$, $\varphi \in {\mathcal P}$, is a finitely additive probability measure on
$\D_0$ and $\varphi(f)=\int f d\Delta_{\varphi}$.

Assume that $g: \XX \to \XX$ is a permutation such that
$g(A) \in \D_0$ for $A \in \D_0$. We want to show that $g^{-1}\circ f\in \C$ for every $f\in \C$.\\ $f\in \C$ if and only for
every $\varepsilon >0$ there exists a step function $\sum_{i=1}^k c_i \chi_{A_i}, A_i \in \D_0$ that
$||f - \sum_{i=1}^k c_i \chi_{A_i}|| < \varepsilon $. Hence $||g^{-1}\circ f - \sum_{i=1}^k c_i \chi_{g(A_i)}|| < \varepsilon $, since
$g(A_i) \in \D_0$. Thus $g^{-1}\circ f$ is uniformly continuous.
\end{example}
\section{Buck uniform distribution mod 1}
In this section we study the connection between systems of measurable sets of positive integers and sets of real numbers in the unit interval. We will use the so-called radical-inverse function which is an important function in the theory of uniform distribution and in the study of low-discrepancy sequences (see for instance \cite{DT, KN}). Finally, the notion of upper Jordan measure mentioned in Example \ref{example3}, here denoted with $\la$, instead of $\na$, will be relevant.\\

Let $\XX=\N$ and $p$ a prime. Let us consider the arithmetic progression $r+(m)$. This leads to the system of decompositions

\begin{equation*}
\E_n = \{r+(p^n); r=0,\dots,p^n -1\}\ ,\quad n=1,2,\dots.
\end{equation*}
If $\Delta(r+(p^n))=\frac{1}{p^n}$, $n=1,2,\dots$, then the corresponding measure density $\na$ will be the covering density with respect to the system $\{p^n; n \in \N\}$, (see \cite{Pasteka4}).\\
Now, let us recall that every $n\in\N$ has a unique $p$-adic expansion , i.e. $n$ can be written as

\begin{equation*}
n=a_0(n) +a_1(n)p+\dots+a_s(n)p^s\ , \quad0\le a_i(n)<p\ ,\quad i=1,\dots,s.
\end{equation*}
The radical-inverse function $g_p:\N \rightarrow [0,1)$ is defined by

\begin{equation*}
g_p(n)=\frac{a_0(n)}{p} +\frac{a_1(n)}{p^2}+\dots+\frac{a_s(n)}{p^{s+1}}\ .
\end{equation*}
This function maps $\N$ to the set of $p$-adic rationals ${\mathbb J}_p = \{\frac{r}{p^s}; r=0,\dots,p^s-1\}$ in $[0, 1)$. Therefore the image of $\N$ under $g_p(n)$ is dense in $[0, 1)$. Since every number
 from ${\mathbb J}_p$ has finite $p$-adic expansion we obtain that the mapping $g_p : \N \to {\mathbb J}_p$ is a bijection.

The properties of $p$-adic expansions provide that

\begin{equation*}
g_p(r+(p^n))=\left[\frac{a}{p^n}, \frac{a+1}{p^n}\right)\cap {\mathbb J}_p
\end{equation*}
for $0\le r <p^n$ and $\frac{a}{p^n}=g_p(r)$. Let $\D_\ell$ be the system of all $S \subset {\mathbb J}_p$ such that
$\la(S)+\la({\mathbb J}_p \setminus S)=1$. Then $g_p$ and $\la$ satisfy condition (2) of Theorem \ref{thm7} which is equivalent to condition (1).\\

Let us remark that the sequence $(g_p(n))_{n\in\mathbb{N}}$, with $p$ not necessarily prime, is called the van der Corput sequence in base $p$ and it is a well-known example of u.d.\ sequence in $[0,1]$ (see \cite{DT, KN}). Moreover, the above construction has been considered and extended to more general systems of numeration by several researchers (see e.g. \cite{Carbone, HIT}). Recently, this method has been applied to obtain the so-called $LS$-sequences (see \cite{Carbone}). These sequences were first introduced in \cite{Carbone2} as sequences of points associated to the so-called $LS$-sequences of partitions of $[0, 1[$. The latter being obtained as a particular case of a splitting procedure introduced by Kakutani \cite{Kakutani} and generalized in \cite{Volcic}, for a particular choice of the parameters $L$ and $S$. Moreover, this construction has been generalized to the multidimensional case in \cite{Carbone_Volcic}.
Finally, let us note that when $L=p$ and $S=0$ the $LS$-sequence coincides with the van der Corput sequence in base $p$ (see \cite{AHZ}).\\

Now, let us consider the Cantor expansion. By this expansion every $x \in \N$ is uniquely given in the form

\begin{equation*}
x = b_1(x)1! + b_2(x)2!+\dots+b_s(x)s!, \ s \in \N, 0\le b_i(x) \le i,\ i=1,\dots,s.
\end{equation*}
Then we define a generalization of the radical-inverse function by
\begin{equation}\label{eq3}
g_v(x)=\frac{b_1(x)}{2!} + \frac{b_2(x)}{3!}+\dots+\frac{b_s(x)}{(s+1)!}. 
\end{equation}
Consider
\begin{equation*}
\E_n = \{r+(n!); r=0,\dots,n! -1\}\ ,\quad n=1,2,\dots,
\end{equation*}
as system of decompositions of $\N$, then $\na=\ma$-Buck measure density. Since every rational number in $[0,1)$ has finite Cantor expansion we observe that $g_v: \N \to {\mathbb J}$
is a bijective mapping. Clearly, for $n=1,2,\dots,$ and $r=0,\dots,n! -1$ we have
\begin{equation*}
g_v(r+(n!))= \left[\frac{b}{n!}, \frac{b+1}{n!}\right)\cap{\mathbb J}, \qquad g_v(r)= \frac{b}{n!}.
\end{equation*}
Again $\D_\ell$ is the set of all $S \subset {\mathbb J}$ such that $\la(S)+\la({\mathbb J} \setminus S)=1$, then $\la$ and $g_v$
fulfill the condition $(2)$ of Theorem \ref{thm7}.

Moreover, we observe that both $g_v$ and $g^{-1}_v$ satisfy $(1)$ of Theorem \ref{thm7}. Therefore  Theorem \ref{thm8} assures that a sequence $\{x_n\}_{n\in\mathbb{N}}$ of elements of ${\mathbb J}$ is a B.u.d.\ sequence if and only if
$\{g^{-1}_v(x_n) \}$ is a B.u.d.\ sequence in $\N$.\\

Let us remark that the above example of Cantor expansion can be extended to general Cantor series, as shown in the following example.\\

Let $\left\{Q_n^{(k)}\right\}$ be increasing sequences of positive integers, $k=1, \dots, s$ such that $Q_n^{(k)}|Q_{n+1}^{(k)}, n=1,2,\dots$. Then every positive integer $a$ has a
unique representation in Cantor series of the form (see \cite{Cantor})
\begin{equation*}
a=\sum_{j=1}^{m_k} a^k_j Q_j^{(k)}, \qquad  0\le a^k_j < \frac{Q_{j+1}^{(k)}}{Q_j^{(k)}}.
\end{equation*}
Put
\begin{equation*}
\gamma_k(a)=\sum_{j=1}^{m_k} \frac{a^k_j}{Q_{j+1}^{(k)}}\ .
\end{equation*}
We can associate to $a$ a point in the $s$-dimensional unit interval
\begin{equation*}
 \gamma(a) = \left(\gamma_1(a), \dots, \gamma_s(a)\right).
\end{equation*}
If $J \subset [0,1]^s$ is the set of the form $J=J_1\times \dots\times J_s$ then
\begin{equation*}
A(\{\gamma(n)\}, J) = \bigcap_{k=1}^s A\left(\{\gamma_k(n)\}, J_k\}\right).
\end{equation*}
For $J_k = \left[\frac{i}{Q_n^{(k)}}, \frac{i+1}{Q_n^{(k)}}\right[$ we have
\begin{equation*}
A(\{\gamma_k(n)\}, J_k)= b + \left(Q_n^{(k)}\right)\ ,
\end{equation*}
with $b$ a suitable positive integer, and $b + \left(Q_n^{(k)}\right)$ the residue class of $b$ mod $Q_n^{(k)}$. Thus from the Chinese remainder theorem, if $Q_n^{(k_1)}, Q_n^{(k_2)}$, $k_1 \neq k_2$ are coprime, then $A(\{\gamma(n)\}, J)$ is Buck measurable and
\begin{equation}\label{chinese}
\mu(A(\{\gamma(n)\}, J) = |J_1|\dots|J_s|.
\end{equation}
Since the set of the points $\left(\frac{i_1}{Q_n^{(1)}}, ..., \frac{i_s}{Q_n^{(s)}}\right)$ is dense in $[0,1]^s$ we can conclude that \eqref{chinese} holds for arbitrary
intervals $J_k$, $k=1,\dots,s$.\\

The above statements can be adapted to a more general setting. Let $f$ be a non-decreasing real-valued function on ${\mathbb J}$, with
$f(0)=0$, $f(1)=1$. For every $S \subset {\mathbb J}$, we can associate, in the usual way, the Jordan-Stieltjes upper measure $\la_f(S)$ defined as
$\la_f([a,b)\cap \mathbb J) = f(b)-f(a)$, with $a,b\in\Q$. By the generalized radical-inverse function $g$ defined in \eqref{eq3}, we can associate a finite additive measure on the system $\D_0$, where $\Delta_f(r+(n!))=\la_f(g_v(r+(n!)))$. On the other hand, if a finitely additive probability measure $\Delta$ on $\D_0$ is given, we can define a non-decreasing function $f(x)= \Delta(g^{-1}_v([0,x)\cap {\mathbb J})), x \in {\mathbb J} $, since every rational number can be expressed in the form $x=\frac{a}{n!}$ and so the preimage of $[0,x)\cap {\mathbb J}$ is a union of a finite number of sets of the form $r+(n!)$. Thus $\Delta = \Delta_f$.
If we denote by $\na_f$ the corresponding measure density, condition (1) of Theorem \ref{thm7} is fulfilled and Theorem \ref{thm8} yields the following

\begin{corollary}
A sequence of positive integers $\{y_n\}$ is $\nu_f$-B.u.d.\ if and only if $\{g_v(y_n)\}$ is $\ell_f$-B.u.d..
\end{corollary}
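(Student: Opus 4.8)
The plan is to obtain the stated equivalence as a two-sided application of Theorem \ref{thm8}, one application to the bijection $g_v\colon\N\to{\mathbb J}$ and one to its inverse $g_v^{-1}\colon{\mathbb J}\to\N$. Theorem \ref{thm8} is a one-directional transport statement, so to upgrade it to an ``if and only if'' I must know that $g_v$ is a genuine measure isomorphism, i.e.\ that condition (1) of Theorem \ref{thm7} holds simultaneously for $g_v$ and for $g_v^{-1}$; this joint verification is where the real content lies.

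First I would confirm condition (2) of Theorem \ref{thm7} for $g_v$, which by that theorem is equivalent to condition (1). On a generator $r+(n!)\in\D_0$ the relation $g_v(r+(n!))=[\frac{b}{n!},\frac{b+1}{n!})\cap{\mathbb J}$ combined with the very definition $\Delta_f(r+(n!))=\la_f(g_v(r+(n!)))$ gives $\la_f(g_v(r+(n!)))=\Delta_f(r+(n!))$, and finite additivity extends this equality to all of $\D_0$. In particular the inequality demanded by (2) holds, so $g_v(S)$ is $\la_f$-measurable with $\ell_f(g_v(S))=\nu_f(S)$ for every $\na_f$-measurable $S\subset\N$.

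Next I would run the same computation in reverse for $g_v^{-1}$. Here ${\mathbb J}$, equipped with the measure density $\la_f$, serves as the base space, while $\na_f$ on $\N$ plays the role of the target submeasure; one notes that $\na_f$ is indeed a strong submeasure by the usual modularity argument, taking covering sets $A\supseteq S$ and $B\supseteq T$ in $\D_0$ and using $\Delta_f(A\cup B)+\Delta_f(A\cap B)=\Delta_f(A)+\Delta_f(B)$. Since $\na_f$ agrees with $\Delta_f$ on $\D_0$, each generating interval satisfies $\na_f(g_v^{-1}([\frac{b}{n!},\frac{b+1}{n!})\cap{\mathbb J}))=\na_f(r+(n!))=\Delta_f(r+(n!))=\la_f([\frac{b}{n!},\frac{b+1}{n!})\cap{\mathbb J})$, so $g_v^{-1}$ also satisfies condition (2), hence condition (1).

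With both $g_v$ and $g_v^{-1}$ verified, the equivalence follows. Writing $x_n=g_v(y_n)$, Theorem \ref{thm8} applied to $g_v$ shows that $\ell_f$-B.u.d.\ of $\{g_v(y_n)\}$ forces $\nu_f$-B.u.d.\ of $\{y_n\}=\{g_v^{-1}(x_n)\}$, while applying it to $g_v^{-1}$ gives the opposite implication. The same two implications can be read off directly from the counting-set identity $A(S,\{y_n\})=A(g_v(S),\{x_n\})$ recorded before Corollary \ref{corollary1}: once $g_v$ preserves measurability and measure in both directions, this identity carries the defining condition of one B.u.d.\ notion over to the other. I expect the only genuine obstacle to be this symmetry, which is nonetheless painless here precisely because $\Delta_f$ was \emph{defined} as the $\la_f$-image of the generators under $g_v$, forcing equality rather than mere inequality in condition (2).
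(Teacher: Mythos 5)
Your proposal is correct and follows the same route as the paper: the paper likewise observes that $\Delta_f(r+(n!))=\la_f(g_v(r+(n!)))$ by definition, so condition (2) (hence (1)) of Theorem \ref{thm7} holds for $g_v$ and for $g_v^{-1}$, and then applies Theorem \ref{thm8} in both directions. You simply spell out the details (additivity on $\D_0$, the role reversal with $\na_f$ as the strong submeasure) that the paper leaves implicit.
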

Moreover, we can prove the following result
\begin{lemma}\label{lemma3.2}
Let $f$ and $\Delta_f$ be as above. Then $f$ is uniformly continuous on ${\mathbb J}$ if and only if

\begin{equation}\label{eq4}
\lim_{n \to \infty} \Delta_f(r+(n!))=0
\end{equation}
uniformly for $r=0,1,\dots$.
\end{lemma}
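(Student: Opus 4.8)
The plan is to translate condition \eqref{eq4} into a statement about the increments of $f$ across the level-$n$ factorial intervals, and then to compare that with the modulus of continuity of $f$ in the usual metric on ${\mathbb J}\subset[0,1)$. Recall that $g_v(r+(n!))=[\frac{b}{n!},\frac{b+1}{n!})\cap{\mathbb J}$ with $\frac{b}{n!}=g_v(r)$, so by the definition of $\la_f$ one has
\begin{equation*}
\Delta_f(r+(n!))=\la_f\left(g_v(r+(n!))\right)=f\!\left(\tfrac{b+1}{n!}\right)-f\!\left(\tfrac{b}{n!}\right).
\end{equation*}
Since $g_v$ restricts to a bijection of a complete residue system mod $n!$ onto $\{0,\frac{1}{n!},\dots,\frac{n!-1}{n!}\}$, the index $b$ runs over all of $\{0,\dots,n!-1\}$ as $r$ does. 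Hence \eqref{eq4}, holding uniformly in $r$, is exactly the assertion that $\max_{0\le b<n!}\big(f(\frac{b+1}{n!})-f(\frac{b}{n!})\big)\to0$ as $n\to\infty$, and I would prove the lemma in this equivalent form, with uniform continuity understood in the Euclidean metric on ${\mathbb J}$.

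For the direction ``$f$ uniformly continuous $\Rightarrow$ \eqref{eq4}'' I would fix $\varepsilon>0$, take the corresponding $\delta>0$ from uniform continuity, and choose $N$ so large that $\frac{1}{n!}<\delta$ for all $n\ge N$. Each level-$n$ interval then has endpoints at Euclidean distance less than $\delta$, so its $f$-increment is $<\varepsilon$; as this bound is independent of $b$ (hence of $r$), \eqref{eq4} follows.

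For the converse I would argue by a covering estimate. Given $\varepsilon>0$, pick $N$ with $f(\frac{b+1}{n!})-f(\frac{b}{n!})<\varepsilon$ for all $n\ge N$ and all $b$, and set $\delta=\frac{1}{N!}$. If $x,y\in{\mathbb J}$ with $x\le y$ and $y-x<\delta$, then writing $b=\lfloor x\,N!\rfloor$ one has $\frac{b}{N!}\le x\le y<\frac{b+2}{N!}$, so $[x,y)$ meets at most the two adjacent cells $[\frac{b}{N!},\frac{b+1}{N!})$ and $[\frac{b+1}{N!},\frac{b+2}{N!})$. Monotonicity of $f$ then gives $0\le f(y)-f(x)\le f(\frac{b+2}{N!})-f(\frac{b}{N!})<2\varepsilon$ by telescoping the two increments (the top cell $b=N!-1$ being handled by $f(1)=1$), which establishes uniform continuity with modulus $\delta$.

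The only genuinely delicate point — and the one I would present carefully — is the half-open endpoint convention built into $\la_f([a,c)\cap{\mathbb J})=f(c)-f(a)$. Because every element of ${\mathbb J}$ equals some $\frac{b}{n!}$, a jump of $f$ at an interior point $c$ sits at a right endpoint of the level-$n$ cells for all large $n$, and the increment of the cell immediately to the left of $c$ converges to that jump; thus \eqref{eq4} genuinely forces the \emph{absence of jumps}, not merely small oscillation inside the cells. This is precisely why the equivalence is with continuity in the Euclidean metric, and it is what makes the factor-of-two covering argument yield honest uniform continuity rather than a weaker statement. The remaining steps — the bijectivity of $g_v$ on residue systems and the boundary bookkeeping for $b$ near $n!$ — are routine.
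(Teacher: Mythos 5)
Your proof is correct and follows essentially the same route as the paper's: both reduce $\Delta_f(r+(n!))$ to the increment $f\big(\tfrac{b+1}{n!}\big)-f\big(\tfrac{b}{n!}\big)$ via $g_v$ and compare this with the Euclidean modulus of continuity of the monotone function $f$. If anything, your two-cell covering step is more careful than the paper's, which asserts that any interval of length less than $\tfrac{1}{(n+1)!}$ lies inside a single cell $[\tfrac{c}{n!},\tfrac{c+1}{n!})$ --- not true when the interval straddles a cell boundary --- so your factor-of-two telescoping quietly repairs that minor slip.
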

\begin{proof}
Suppose that $[x_1, x_2) \in {\mathbb J}$. Clearly,

\begin{equation}\label{eq5}
f(x_2)-f(x_1) = \Delta_f(g^{-1}_v([x_1, x_2)\cap {\mathbb J})\ .
\end{equation}
Let $x_2 - x_1 <\frac{1}{(n+1)!}$ for some $n \in \N$. Then $[x_1, x_2) \subset [\frac{c}{n!}, \frac{c+1}{n!})$ for some $c\in\N$ with $0< c \le n!$. Thus $g^{-1}_v([x_1, x_2)\cap {\mathbb J} \subset r+(n!)$ for some $r\in\N$. Using \eqref{eq5}, this
yields $f(x_2)-f(x_1) \le \Delta_f(r+(n!))$. Hence $f$ is uniformly continuous on ${\mathbb J}$.\\
The other implication immediately follows  from

\begin{equation*}
\Delta_f(r+(n!))=\ell_f (g_v(r+(n!)) = \ell_f\Big(\Big[\frac{c'}{n!}, \frac{c'+1}{n!}\Big)\cap {\mathbb J}\Big)=f\left(\frac{c'+1}{n!}\right) - f\left(\frac{c'}{n!}\right)\ ,
\end{equation*}
with $r,n\in \N$, for a suitable non-negative integer $c'$.
\end{proof}

Since a uniformly continuous function on ${\mathbb J}$ can be extended to a continuous function on $[0,1]$, Lemma \ref{lemma3.2} has the following immediate consequence (see \cite[page 54]{Pasteka}).
\begin{corollary}
If $\{y_n\}$ is a $\nu$-B.u.d.\ sequence of positive integers fulfilling equation \eqref{eq4}, then the sequence $\{g_v(y_n)\}$ is Buck measurable and its Buck distribution function is the continuous extension of $f$ on $[0,1]$.
\end{corollary}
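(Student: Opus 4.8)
The plan is to deduce the corollary by combining Lemma~\ref{lemma3.2}, the preceding corollary, and the continuity of the limiting distribution function. First, since \eqref{eq4} is in force, Lemma~\ref{lemma3.2} tells us that $f$ is uniformly continuous on ${\mathbb J}$; because ${\mathbb J}$ is dense in $[0,1]$, $f$ then extends uniquely to a continuous non-decreasing function $\tilde f$ on $[0,1]$ with $\tilde f(0)=0$, $\tilde f(1)=1$, and this $\tilde f$ will be the claimed Buck distribution function. Next, since $\{y_n\}$ is $\nu_f$-B.u.d., the preceding corollary immediately gives that $\{g_v(y_n)\}$ is $\ell_f$-B.u.d.; that is, for every $B\in\D_{\ell_f}$ the counting set $A(B,\{g_v(y_n)\})$ belongs to $\D_\mu$ and $\mu(A(B,\{g_v(y_n)\}))=\ell_f(B)$.

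The core of the argument is to check that every level set $[0,x)\cap{\mathbb J}$, $x\in[0,1]$, is one of these admissible sets $B$. For rational $x$ this is routine: $[0,x)\cap{\mathbb J}$ is a finite union of basic intervals, so it lies in $\D_{\ell_f}$ and, by finite additivity and the relation $\la_f([a,b)\cap{\mathbb J})=f(b)-f(a)$, one has $\ell_f([0,x)\cap{\mathbb J})=f(x)=\tilde f(x)$. For irrational $x$ I would proceed by approximation: covering $[0,x)\cap{\mathbb J}$ by the single set $[0,b)\cap{\mathbb J}$ with rational $b>x$ and letting $b\downarrow x$ yields $\la_f([0,x)\cap{\mathbb J})\le\tilde f(x)$ by continuity of $\tilde f$, while covering the complement $[x,1)\cap{\mathbb J}$ by $[a,1)\cap{\mathbb J}$ with rational $a<x$ and letting $a\uparrow x$ yields $\la_f([x,1)\cap{\mathbb J})\le 1-\tilde f(x)$. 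Since $\la_f$ is an outer measure, subadditivity gives $\la_f([0,x)\cap{\mathbb J})+\la_f([x,1)\cap{\mathbb J})\ge\la_f({\mathbb J})=1$, so both estimates are equalities; hence $[0,x)\cap{\mathbb J}\in\D_{\ell_f}$ and $\ell_f([0,x)\cap{\mathbb J})=\tilde f(x)$ for every $x\in[0,1]$.

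Finally, I would combine these two facts. Since every $g_v(y_n)$ lies in ${\mathbb J}$, we have $A([0,x)\cap{\mathbb J},\{g_v(y_n)\})=\{n;\ g_v(y_n)<x\}$, so the $\ell_f$-B.u.d.\ property applied to $B=[0,x)\cap{\mathbb J}$ shows that each set $\{n;\ g_v(y_n)<x\}$ is Buck measurable and $\mu(\{n;\ g_v(y_n)<x\})=\tilde f(x)$. This is precisely the assertion that $\{g_v(y_n)\}$ is Buck measurable with Buck distribution function $\tilde f$, the continuous extension of $f$. I expect the main obstacle to be the irrational case in the middle step, and it is exactly here that \eqref{eq4} is indispensable: were $f$ to have a jump, that jump would be charged to both $\la_f([0,x)\cap{\mathbb J})$ and $\la_f([x,1)\cap{\mathbb J})$, forcing their sum above $1$ and breaking the $\ell_f$-measurability of the level set; the uniform continuity furnished by Lemma~\ref{lemma3.2} via \eqref{eq4} is what rules this out.
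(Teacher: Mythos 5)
Your proposal is correct and follows exactly the route the paper intends: the paper gives no written proof, calling the corollary an ``immediate consequence'' of Lemma~\ref{lemma3.2} together with the continuous extension of a uniformly continuous function on ${\mathbb J}$ to $[0,1]$ and the preceding corollary ($\{g_v(y_n)\}$ being $\ell_f$-B.u.d.), which is precisely your argument. Your middle step --- verifying that every level set $[0,x)\cap{\mathbb J}$, including for irrational $x$, lies in $\D_{\ell_f}$ with $\ell_f$-value $\tilde f(x)$ via monotone approximation and subadditivity --- is a correct and useful filling-in of the detail the paper leaves implicit.
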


In the same way, one can prove the following result.
\begin{theorem}
Let $g : \XX \to [0,1]$  be an injective function such that $g(\XX)$ is dense in $[0,1]$ and assume that $g(A_r^{(n)}) = I_r^{(n)}\cap g(\XX)$, with $I_r^{(n)}$ right half-open intervals, and

\begin{equation*}
\lim_{n \to \infty} \ell(I_r^{(n)}) = 0
\end{equation*}
uniformly for $r \in \N$. \\
Denote $f(x) = \Delta(g^{-1}([0,x)\cap g(\XX))$ for every right endpoint $x$ of $I_r^{(n)}$ and for all $r,n\in\N$. Then $f$ is uniformly continuous on  $g(\XX)$ if and only

\begin{equation*}
\lim_{n \to \infty} \Delta(A_r^{(n)}) = 0
\end{equation*}
uniformly for $r \in \N$. \\
In this case for every $\nu$-B.u.d.\ sequence $\{y_n\}$ the sequence $\{g(y_n)\}$ is Buck measurable and its Buck distribution function is the continuous extension of $f$ to $[0,1]$.
\end{theorem}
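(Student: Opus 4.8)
The plan is to transcribe the proof of Lemma~\ref{lemma3.2}, replacing the dyadic/factorial cells $[c/n!,(c+1)/n!)$ by the abstract cells $I_r^{(n)}$ and the radical-inverse map $g_v$ by $g$. Let $D\subset[0,1)$ be the (dense) set of right endpoints of the intervals $I_r^{(n)}$, together with $0$; this is the natural domain of $f$, and ``uniform continuity of $f$ on $g(\XX)$'' is understood on $D$. First I would record that $f$ is well defined and non-decreasing on $D$: for $x\in D$ the set $[0,x)\cap g(\XX)$ is a finite union of images $g(A_{r'}^{(n)})=I_{r'}^{(n)}\cap g(\XX)$ lying to the left of $x$, so its $g$-preimage is a finite union of cells $A_{r'}^{(n)}$, hence belongs to $\D_0$ and $f(x)$ is defined. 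Because $g$ is a bijection onto $g(\XX)$ and $\Delta$ is finitely additive on the algebra $\D_0$, for any $x_1\le x_2$ in $D$ one has the basic identity
\begin{equation*}
f(x_2)-f(x_1)=\Delta\big(g^{-1}([x_1,x_2)\cap g(\XX))\big),
\end{equation*}
valid regardless of the levels at which $x_1,x_2$ occur, since the right-hand set is the difference of two $\D_0$-sets.

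For the implication ``uniform continuity $\Rightarrow$ $\lim_n\Delta(A_r^{(n)})=0$ uniformly'', I write $I_r^{(n)}=[a_r^{(n)},b_r^{(n)})$ with $a_r^{(n)},b_r^{(n)}\in D$, and the identity above with $g^{-1}(I_r^{(n)}\cap g(\XX))=A_r^{(n)}$ gives $\Delta(A_r^{(n)})=f(b_r^{(n)})-f(a_r^{(n)})$. As $b_r^{(n)}-a_r^{(n)}=\ell(I_r^{(n)})\to0$ uniformly in $r$, uniform continuity of $f$ forces $\Delta(A_r^{(n)})\to0$ uniformly. For the converse, fix $\varepsilon>0$ and use the hypothesis to pick $n$ with $\Delta(A_r^{(n)})<\varepsilon/2$ for every $r$. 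Since the $A_r^{(n)}$ partition $\XX$ and $g$ is injective with dense image, the intervals $I_1^{(n)},\dots,I_{k_n}^{(n)}$ form a finite partition of $[0,1)$ into intervals of positive length; let $\delta:=\min_r\ell(I_r^{(n)})>0$. Any $[x_1,x_2)$ with $x_2-x_1<\delta$ then meets at most two consecutive cells $I_r^{(n)},I_{r+1}^{(n)}$, so $g^{-1}([x_1,x_2)\cap g(\XX))\subset A_r^{(n)}\cup A_{r+1}^{(n)}$ and the basic identity yields $f(x_2)-f(x_1)\le\Delta(A_r^{(n)})+\Delta(A_{r+1}^{(n)})<\varepsilon$; this is exactly uniform continuity of $f$ on $D$.

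For the final assertion, uniform continuity of $f$ on the dense set $D$ lets it be extended to a continuous non-decreasing function $\bar f$ on $[0,1]$. Let $\{y_n\}$ be a $\na$-B.u.d.\ sequence in $\XX$. For a right endpoint $x\in D$ put $S_x:=g^{-1}([0,x)\cap g(\XX))\in\D_0\subset\D_\nu$. Injectivity of $g$ gives the identity of counting sets $A([0,x),\{g(y_n)\})=\{n:y_n\in S_x\}=A(S_x,\{y_n\})$, so by the definition of a $\na$-B.u.d.\ sequence this set is Buck measurable with $\mu\big(A([0,x),\{g(y_n)\})\big)=\nu(S_x)=\na(S_x)=\Delta(S_x)=f(x)$. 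For an arbitrary $x\in[0,1]$ I would squeeze it between endpoints $x'<x<x''$ in $D$ arbitrarily close to $x$ (the points of $D$ are dense because $\ell(I_r^{(n)})\to0$ uniformly) and use the inclusions $A([0,x'),\cdot)\subset A([0,x),\cdot)\subset A([0,x''),\cdot)$ together with the monotonicity of $\muH,\muD$ and the continuity of $\bar f$ to obtain $\bar f(x)\le\muD\le\muH\le\bar f(x)$. Hence $A([0,x),\{g(y_n)\})$ is Buck measurable with value $\bar f(x)$, i.e.\ the Buck distribution function of $\{g(y_n)\}$ is $\bar f$.

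The main obstacle is the direction ``$\lim_n\Delta(A_r^{(n)})=0\Rightarrow f$ uniformly continuous'': unlike the factorial case, the partitions $\{I_r^{(n)}\}_r$ need not be nested as $n$ grows, so a short interval $[x_1,x_2)$ cannot in general be placed inside a single cell $g(A_r^{(n)})$. The fix is the ``meets at most two consecutive cells'' estimate, which uses only that at each fixed level the cells tile $[0,1)$ with a positive minimal length, at the harmless cost of a factor $2$. A secondary point, handled by density of $D$ and continuity of $\bar f$, is upgrading the distribution-function statement from right endpoints to arbitrary $x\in[0,1]$.
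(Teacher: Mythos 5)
Your proof is correct and follows the same route the paper intends (the theorem is stated with only the remark that it is proved ``in the same way'' as Lemma~\ref{lemma3.2}): express increments of $f$ as $\Delta$ of preimages of half-open intervals, read off both implications from the basic identity, and pass to $[0,1]$ by density and a squeeze between measurable counting sets. Your ``meets at most two consecutive cells'' estimate is in fact a needed repair of the template argument rather than a cosmetic change: even in Lemma~\ref{lemma3.2} an interval with $x_2-x_1<\frac{1}{(n+1)!}$ can straddle a point $\frac{c}{n!}$, so the containment $[x_1,x_2)\subset[\frac{c}{n!},\frac{c+1}{n!})$ claimed there can fail, and your factor-of-two bound using the minimal cell length at a fixed level is the correct fix, which moreover covers the non-nested partitions allowed by this theorem.
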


It is well-known that a real-valued uniformly continuous function $f$ on a metric space $(\XX, \rho)$ can be extended to a continuous function on a compact space $\bar{\XX}$ (see \cite{Rudin}). In particular, one can define the concept of Riemann integrability by defining the Riemann upper and lower sums associated to the decompositions $\E_n, n=1,2,\dots$ and to the finitely additive measure $\Delta$. More precisely, we have the following definition.

\begin{definition}\label{def1}
Let $\{c_n\}$ be a sequence of positive integers such that for every $A \in \D_0$ there exists $n_0$ such that $A$ is a union of sets from $\E_{c_n}$, for $n \ge n_0$. Then a function $f$ is said to be Riemann integrable
if and only if there exists a real number $S$ such that for every system of finite sequences
$\{a^{(n)}_i; i=1,\dots,k_{c_n} \}$ with $a^{(n)}_i \in A^{(c_n)}_i$
we have
$$
\lim_{n \to \infty} \sum_{i=1}^{k_{c_n}} \Delta(A_i^{(c_n)})f(a^{(n)}_i) = S.
$$
In this case $S = \int f$.
\end{definition}
\begin{remark}
Let $B \subset \XX$ then $B$ is a $\na$-measurable set if and only if its indicator function $\chi_B$ is Riemann integrable and in this case
$$
\na(B) = \int \chi_B.
$$
\end{remark}
In this way, a sequence $\{x_n\}_{n\in\N}$ in $\XX$ is $\na$-B.u.d.\ if and only if for every Riemann integrable function
$f$ we have

\begin{equation*}
\lim_{N \to \infty} \sum_{n=1}^N f(x_{s_n}) = \int f
\end{equation*}
for every sequence of positive integers $\{s_n\}$ uniformly distributed in $\Z$.

Now, since $f$ is a real valued function uniformly continuous with respect to the metric $\rho$, we can extend it to a continuous function on $\bar{\XX}$ and
\begin{equation*}
\int f = \int f dP.
\end{equation*}
Thus Theorem \ref{thm7} can be extended to uniformly continuous functions $f$ with respect to $\rho$.

Under the assumption of continuity we can restate Theorem \ref{thm7}.

\begin{theorem}
 Let $g$ be a bijection such that $g^{-1}$ is uniformly continuous with respect to the metric $\rho$. Then $g$ preserves measure density if and only if there exists at least one $\na$-B.u.d.\ sequence $\{x_n\}_{n\in\mathbb{N}}$ such that $\{g^{-1}(x_n)\}$ is also
$\na$-B.u.d..
\end{theorem}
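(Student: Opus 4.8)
The plan is to prove the two implications separately, exploiting that one of them is already available as Corollary \ref{corollary1} while the other is exactly where the new hypothesis that $g^{-1}$ be uniformly continuous enters. For the direction ``$\Rightarrow$'', suppose $g$ preserves the measure density. By Corollary \ref{corollary1}, for \emph{every} $\na$-B.u.d.\ sequence $\{x_n\}_{n\in\N}$ the sequence $\{g^{-1}(x_n)\}_{n\in\N}$ is again $\na$-B.u.d. Since $\XX$ is dense in the compact metric space $\bar\XX$ carrying the Borel probability measure $P$, the construction recalled before the statement produces at least one $\na$-B.u.d.\ sequence in $\XX$ (starting from a B.u.d.\ sequence in $\bar\XX$, whose existence is guaranteed on every compact metric space). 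Specializing ``for every'' to this sequence already yields the required ``there exists at least one'', so this implication is immediate.

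For the direction ``$\Leftarrow$'', I would argue through uniformly continuous test functions, so as to avoid assuming a priori that $g$ carries measurable sets to measurable sets. Fix a $\na$-B.u.d.\ sequence $\{x_n\}$ for which $\{g^{-1}(x_n)\}$ is also $\na$-B.u.d., and fix any sequence $\{s_n\}$ u.d.\ in $\Z$. Let $f\colon\XX\to\R$ be uniformly continuous with respect to $\rho$; then $f$ is Riemann integrable in the sense of Definition \ref{def1}, and because $g^{-1}$ is uniformly continuous the composition $f\circ g^{-1}$ is uniformly continuous too, hence also Riemann integrable. Applying the Riemann-integral characterization of $\na$-B.u.d.\ sequences to $\{g^{-1}(x_n)\}$ with integrand $f$, and to $\{x_n\}$ with integrand $f\circ g^{-1}$, gives
\begin{equation*}
\int f = \lim_{N\to\infty}\frac1N\sum_{n=1}^N f\big(g^{-1}(x_{s_n})\big) = \lim_{N\to\infty}\frac1N\sum_{n=1}^N (f\circ g^{-1})(x_{s_n}) = \int f\circ g^{-1}.
\end{equation*}
Since every such $f$ extends to a continuous $\bar f$ on $\bar\XX$ with $\int f=\int_{\bar\XX}\bar f\,dP$, and $g^{-1}$ extends to a continuous map $\overline{g^{-1}}\colon\bar\XX\to\bar\XX$ with $\overline{f\circ g^{-1}}=\bar f\circ\overline{g^{-1}}$, the displayed identity becomes $\int_{\bar\XX}\bar f\,dP=\int_{\bar\XX}(\bar f\circ\overline{g^{-1}})\,dP$ for every continuous $\bar f$. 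By uniqueness of the representing measure, this forces $P$ to be invariant under $\overline{g^{-1}}$, that is $P\big((\overline{g^{-1}})^{-1}(B)\big)=P(B)$ for every Borel set $B\subset\bar\XX$.

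It then remains to convert this invariance into measure-density preservation by verifying condition (2) of Corollary \ref{corollary2}. Fix a generator $A=A_i^{(n)}$; its closure $\bar A$ is clopen in $\bar\XX$ with $P(\bar A)=\Delta(A)$, so $(\overline{g^{-1}})^{-1}(\bar A)$ is again clopen and therefore equals $\bar B$ for some $B\in\D_0$. Evaluating on the dense set $\XX$ and using $\overline{g^{-1}}|_\XX=g^{-1}$ together with $\bar A\cap\XX=A$ gives $B=\bar B\cap\XX=g(A)$, while the invariance yields $P(\bar B)=P(\bar A)=\Delta(A)$, hence $\Delta(B)=\Delta(A)$. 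Consequently $g(A)=B\in\D_0$ and $\na(g(A))=\Delta(B)=\Delta(A)$, which is precisely condition (2) of Corollary \ref{corollary2}; therefore $g$ preserves the measure density.

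The main obstacle is the asymmetry of the hypothesis: only $g^{-1}$, and not $g$, is assumed uniformly continuous. This is exactly what is needed to guarantee that $f\circ g^{-1}$ is a legitimate Riemann integrable test function and that $g^{-1}$ admits a continuous extension $\overline{g^{-1}}$ to $\bar\XX$; without it both the change-of-variables step and the identification of the clopen set $(\overline{g^{-1}})^{-1}(\bar A)$ with a closure $\bar B$ of a $\D_0$-set would break down. The remaining care goes into two facts I would invoke from the earlier construction and from compactness of $\bar\XX$: that the clopen subsets of $\bar\XX$ are exactly the closures $\bar B$ with $B\in\D_0$, and that equality of integrals against all continuous functions identifies the two Borel measures on $\bar\XX$.
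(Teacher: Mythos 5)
Your proof is correct. The paper in fact states this theorem without any proof --- the only hint it gives is the preceding remark that Theorem \ref{thm7} extends to functions uniformly continuous with respect to $\rho$ --- and your argument fills that gap using precisely the machinery the paper sets up for this purpose: the Riemann-integral (Weyl-type) characterization of $\na$-B.u.d.\ sequences for the change-of-variables identity $\int f=\int f\circ g^{-1}$, the continuous extension of $g^{-1}$ to $\bar\XX$ and the resulting invariance of $P$ under the push-forward, and the identification of clopen subsets of $\bar\XX$ with closures of $\D_0$-sets (which does hold here, by compactness of $\bar\XX$ together with condition ({\bf i}) and inequality \eqref{1}), leading back to condition (2) of Corollary \ref{corollary2}. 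The forward implication via Corollary \ref{corollary1} combined with the existence of at least one $\na$-B.u.d.\ sequence (guaranteed by the construction in Section 2) is likewise the intended one, and you correctly isolate the two places where the hypothesis on $g^{-1}$ alone, rather than on $g$, is what the argument actually consumes.
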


\section{Buck uniform distribution on a free semigroup}

Let  $\XX=\F$ be a free semigroup generated by a countable set of generators $\{p_1, p_2,\dots,p_n, \dots\}$. Let  $a \in \F$ and denote by $\U(a)$ the set of all divisors of $a$. We say that a set
$S \subset \F$ has a {\it divisor density} if and only if there exists

\begin{equation*}
\dv(S)=\lim_{n \to \infty}\frac{|S \cap \U(p_1^n...p_n^n)|}{(n+1)^n}\ .
\end{equation*}
We denote by $\D_{\dv}$ the family of all sets having a divisor density.\\

It is easy to show that $\dv$ is a finitely additive probability measure.\\ 
Let us consider some examples.\\
If $\mathcal{F}$ denotes the set of square free elements, then $|\mathcal{F} \cap \U(p_1^n...p_n^n)|=2^n$, hence $\mathcal{F} \in \D_{\dv}$ and $\dv(\mathcal{F})=0$.\\
Let $\F^s=\{a^s, a\in \N \}$, $s=2,3,\dots$. For $n$ sufficiently large we have
$|\F^s \cap \U(p_1^n...p_n^n)|=([n/s]+1)^n$. Thus $\F^s \in \D_{\dv}$ and $\dv(\F^s)=0$.\\
For $r\in\N$, denote by ${\mathbb O}_r$ the set of all elements of $\F$ of the form $p_{i_1}^{\beta_1}\dots p_{i_s}^{\beta_s}$, where $\beta_i \ge r$.

It can be easily seen that $|{\mathbb O}_r \cap \U(p_{1}^{n}\dots p_{n}^{n})|=(n-r+1)^n$. Thus ${\mathbb O}_r \in \D_{\dv}$ and
$\dv({\mathbb O}_r)=e^{-r}$, where $e$ is Euler's number.

\begin{proposition}
Let $S \in \D_{\dv}$ and $a \in \F$. Then $aS \in \D_{\dv}$ and $\dv(aS)=\dv(S)$.
\end{proposition}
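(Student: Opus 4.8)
The plan is to reduce the statement to a counting comparison and then to control the difference between two divisor-counting functions. Write $a = p_{1}^{\alpha_{1}}\cdots p_{k}^{\alpha_{k}}$ with $\alpha_{i}\ge 0$, and abbreviate $W_{n}=\U(p_{1}^{n}\cdots p_{n}^{n})$, so that $|W_{n}|=(n+1)^{n}$ and $\dv(S)=\lim_{n}|S\cap W_{n}|/(n+1)^{n}$. Since $\F$ is a free (hence cancellative) semigroup, the map $s\mapsto a s$ is injective, so $|aS\cap W_{n}|=|\{s\in S:\ a s\in W_{n}\}|$; the whole problem is therefore to understand which $s$ satisfy $a s\in W_{n}$.

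First I would determine this condition explicitly. Writing $s=p_{1}^{g_{1}}p_{2}^{g_{2}}\cdots$, the exponent of $p_{i}$ in $a s$ is $\alpha_{i}+g_{i}$ (with $\alpha_{i}=0$ for $i>k$). For $n\ge k$ the membership $a s\in W_{n}$ holds exactly when $g_{i}\le n-\alpha_{i}$ for $i\le n$ and $g_{i}=0$ for $i>n$; that is, $\{s:\ a s\in W_{n}\}=S\cap T_{n}$, where $T_{n}=\U(p_{1}^{n-\alpha_{1}}\cdots p_{n}^{n-\alpha_{n}})\subseteq W_{n}$. Thus $|aS\cap W_{n}|=|S\cap T_{n}|$, and it remains to show that $|S\cap T_{n}|/(n+1)^{n}$ converges to $\dv(S)$.

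The decisive point is that $T_{n}$ is \emph{not} a set of the form $\U(p_{1}^{m}\cdots p_{m}^{m})$, so the definition of $\dv$ cannot be applied to it directly. Instead I would compare $S\cap T_{n}$ with $S\cap W_{n}$ through the small ``corner'' $W_{n}\setminus T_{n}$. A direct count gives $|T_{n}|=(n+1)^{n-k}\prod_{i=1}^{k}(n-\alpha_{i}+1)$, so that
\begin{equation*}
\frac{|W_{n}\setminus T_{n}|}{(n+1)^{n}}=1-\prod_{i=1}^{k}\Big(1-\frac{\alpha_{i}}{n+1}\Big)\longrightarrow 0 .
\end{equation*}
Since $T_{n}\subseteq W_{n}$, we have $0\le |S\cap W_{n}|-|S\cap T_{n}|=|S\cap(W_{n}\setminus T_{n})|\le |W_{n}\setminus T_{n}|$; dividing by $(n+1)^{n}$ and using $S\in\D_{\dv}$ shows that $|S\cap T_{n}|/(n+1)^{n}$ has the same limit as $|S\cap W_{n}|/(n+1)^{n}$, namely $\dv(S)$. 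Combining with $|aS\cap W_{n}|=|S\cap T_{n}|$ proves both that the defining limit for $aS$ exists (so $aS\in\D_{\dv}$) and that $\dv(aS)=\dv(S)$.

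The only genuinely nontrivial step is the passage from $T_{n}$ to $W_{n}$: one must recognize that shifting the exponent bounds by the fixed vector $(\alpha_{1},\dots,\alpha_{k})$ perturbs only finitely many of the $n$ coordinates, so the relative size of the discarded elements is $O(1/n)$ and vanishes in the limit. Everything else — the injectivity, the explicit description of $T_{n}$, and the cardinality count — is routine.
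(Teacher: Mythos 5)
Your proof is correct and takes essentially the same route as the paper's: both identify $\{s\in S:\ as\in W_n\}$ with the intersection of $S$ with a slightly shrunken divisor set inside $\U(p_1^n\cdots p_n^n)$ and observe that the discarded ``corner'' has relative size $O(1/n)$, so the normalized counts for $aS$ and $S$ have the same limit. The only difference is that you handle a general $a$ in one step, whereas the paper reduces to a single generator $p$ (and writes the resulting count as an equality where only the inequality $|pS\cap W_n|\ge |S\cap W_n|-(n+1)^{n-1}$ is warranted); your write-up is the more careful of the two.
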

\begin{proof}
It is sufficient to prove the assertion for $a=p$, one of the generators. Assume that $p$ occurs in $p_1,\dots,p_n$. If $d \in  S\cap \U(p_1^n\dots p_n^n)$ and $d=p_1^{j_1}\dots p_n^{j_n}$ then $pd$ does not divide
$p_1^n\dots p_n^n$ only in the case when the exponent of $p$ is $n$. Thus
$|pS\cap \U(p_1^n\dots p_n^n)|= S\cap \U(p_1^n\dots p_n^n)-(n+1)^{(n-1)}$.
\end{proof}

\begin{corollary}
For $b \in \F$ we have $b\F \in  \D_{\dv}$ and $\dv(b\F)=1$.\\
 For every $S \subset \F$, $b \in \F$ we have that $(b\F)\cap S \in  \D_{\dv}$ implies $S \in \D_{\dv}$ and $\dv(S)=\dv((b\F)\cap S)$.
\end{corollary}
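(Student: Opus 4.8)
The plan is to deduce both assertions from the Proposition just proved, together with the elementary monotonicity of the counting functional $T \mapsto |T \cap \U(p_1^n\cdots p_n^n)|$.

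For the first statement I would apply the Proposition with $S = \F$ and $a = b$. Indeed $\F \in \D_{\dv}$ trivially, since $\F \cap \U(p_1^n\cdots p_n^n) = \U(p_1^n\cdots p_n^n)$ has exactly $(n+1)^n$ elements, so $\dv(\F) = 1$. As $b\F$ is precisely the set $\{bc; c \in \F\}$ appearing in the Proposition, we immediately obtain $b\F \in \D_{\dv}$ and $\dv(b\F) = \dv(\F) = 1$. (Alternatively one can count directly: writing $f_i$ for the exponent of $p_i$ in $b$, for $n$ large the divisors of $p_1^n\cdots p_n^n$ that are multiples of $b$ number $\prod_{i=1}^n (n - f_i + 1)$, and dividing by $(n+1)^n$ leaves a finite product of factors $\frac{n-f_i+1}{n+1}$, each tending to $1$.)

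For the second statement the key is to show that the part of $S$ lying outside $b\F$ is negligible. Since $\D_{\dv}$ is an algebra and $\dv$ is finitely additive (as already noted), the first statement gives $\F \setminus b\F \in \D_{\dv}$ with $\dv(\F \setminus b\F) = 1 - \dv(b\F) = 0$. Now for any $T \subset \F \setminus b\F$, monotonicity of the counting functional yields $0 \le |T \cap \U(p_1^n\cdots p_n^n)| \le |(\F \setminus b\F)\cap \U(p_1^n\cdots p_n^n)|$; dividing by $(n+1)^n$ and letting $n \to \infty$ forces, by squeezing, $T \in \D_{\dv}$ with $\dv(T) = 0$. I would apply this with $T = S \setminus b\F = S \cap (\F \setminus b\F)$. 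Finally, writing $S$ as the disjoint union $S = ((b\F)\cap S) \cup (S \setminus b\F)$ and using additivity of the counting functional over disjoint sets, $|S \cap \U(p_1^n\cdots p_n^n)| = |((b\F)\cap S) \cap \U(p_1^n\cdots p_n^n)| + |(S\setminus b\F)\cap \U(p_1^n\cdots p_n^n)|$. Dividing by $(n+1)^n$, the first term on the right converges to $\dv((b\F)\cap S)$ by hypothesis and the second to $0$ by the previous step, so the left-hand ratio converges, giving $S \in \D_{\dv}$ and $\dv(S) = \dv((b\F)\cap S)$.

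I expect the only genuinely delicate point to be the squeeze step. Divisor density is defined as an honest limit rather than an upper density, so one must ensure that controlling a subset of a density-zero set yields convergence of its own ratio to zero, and not merely a bound on a limit superior. This is exactly what the monotonicity of the counting functional together with the already-established existence of $\dv(\F \setminus b\F) = 0$ provides, and everything else is routine finite additivity.
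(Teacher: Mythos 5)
Your proof is correct and follows the route the paper clearly intends (the corollary is stated without proof as an immediate consequence of the preceding proposition): part one is the proposition applied to $S=\F$ with $\dv(\F)=1$, and part two is the standard squeeze showing every subset of the null set $\F\setminus b\F$ has divisor density $0$, combined with finite additivity over the disjoint decomposition $S=((b\F)\cap S)\cup(S\setminus b\F)$. Your remark that the squeeze is needed precisely because $\dv$ is defined as a genuine limit (not an upper density) is exactly the right point of care.
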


\begin{proposition}\label{prop2}
Let $H \subset \F$ be the semigroup generated by the generators\linebreak
$\{p_1^{\alpha_1},\dots, p_k^{\alpha_k}, p_{k+1},p_{k+2},\dots.\}$ (for given positive integers $\alpha_i$). Then $H \in \D_{\dv}$ and
$\dv(H)=\frac{1}{\alpha_1\dots\alpha_k}$ .
\end{proposition}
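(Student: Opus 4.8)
The plan is to compute the finite ratios $\dfrac{|H \cap \U(p_1^n\cdots p_n^n)|}{(n+1)^n}$ explicitly and then pass to the limit, exactly as in the examples immediately preceding this proposition. The whole argument rests on a clean description of the divisor set together with a membership criterion for $H$. First I would record the structure of the divisors: since $\F$ is commutative, every element is uniquely $p_1^{j_1}p_2^{j_2}\cdots$ with finitely many nonzero exponents, and $d = p_1^{j_1}\cdots p_n^{j_n}$ divides $p_1^n\cdots p_n^n$ precisely when $0 \le j_i \le n$ for $i=1,\dots,n$. Hence the divisors of $p_1^n\cdots p_n^n$ correspond bijectively to tuples $(j_1,\dots,j_n)\in\{0,1,\dots,n\}^n$, which recovers the normalizing count $(n+1)^n$.

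The key step is the membership criterion for $H$. Because $H$ is generated by $\{p_1^{\alpha_1},\dots,p_k^{\alpha_k},p_{k+1},p_{k+2},\dots\}$ and $\F$ is commutative, an element $p_1^{j_1}\cdots p_n^{j_n}$ lies in $H$ if and only if $\alpha_i \mid j_i$ for every $i\le k$, while the exponents $j_i$ with $i>k$ are unconstrained, each such $p_i$ being itself a generator. This is the one place requiring a short verification; everything else is bookkeeping. Whether or not the empty product (identity) is counted is irrelevant for the density, since it contributes at most one element.

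Combining these observations, for $n \ge k$ the intersection $H\cap\U(p_1^n\cdots p_n^n)$ is counted coordinatewise: each $j_i$ with $i\le k$ ranges over the multiples of $\alpha_i$ in $\{0,\dots,n\}$, giving $\lfloor n/\alpha_i\rfloor+1$ choices, while each $j_i$ with $k<i\le n$ gives $n+1$ choices. Therefore
\[
\frac{|H\cap\U(p_1^n\cdots p_n^n)|}{(n+1)^n}=\prod_{i=1}^{k}\frac{\lfloor n/\alpha_i\rfloor+1}{n+1}.
\]
Letting $n\to\infty$, each factor tends to $1/\alpha_i$, so the limit exists and equals $1/(\alpha_1\cdots\alpha_k)$; this shows $H\in\D_{\dv}$ with $\dv(H)=\frac{1}{\alpha_1\cdots\alpha_k}$. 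I do not expect a genuine obstacle here: the only substantive point is the membership criterion in the second paragraph, and the remainder is an elementary finite product computation together with a routine limit.
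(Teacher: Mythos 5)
Your proposal is correct and follows essentially the same route as the paper: identify divisors of $p_1^n\cdots p_n^n$ with exponent tuples in $\{0,\dots,n\}^n$, observe that membership in $H$ forces $\alpha_i\mid j_i$ for $i\le k$, count $(\lfloor n/\alpha_1\rfloor+1)\cdots(\lfloor n/\alpha_k\rfloor+1)(n+1)^{n-k}$, and pass to the limit. Your version is slightly more careful in stating the membership criterion as an equivalence (the paper only says the intersection ``contains'' these elements while tacitly using equality), but the argument is the same.
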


\begin{proof}
Let $n> \alpha_i, i=1,\dots,k$, $n>k$. Then $H\cap \U(p_1^n...p_n^n)$ contains the elements
$p_1^{s_1\alpha_1},\dots, p_k^{s_k\alpha_k}p_{k+1}^{j_{k+1}}\dots p_n^{j_n}$, where $s_i\alpha_i \le n,\ i=1,\dots,k$
and $j_i \le n,\ i=k+1,\dots,n$.
Thus
$$
|H\cap \U(p_1^n...p_n^n)|=([n/ \alpha_1]+1)\dots([n/ \alpha_k]+1)(n+1)^{(n-k)}.
$$
Thus
$$
\lim_{n\to \infty} \frac{|H\cap \U(p_1^n\dots p_n^n)|}{(n+1)^n}=\frac{1}{\alpha_1\dots\alpha_k}.
$$
\end{proof}

A mapping $x: \F \to [0,1]$ is called $\dv$-{\it uniformly distributed} if and only if for every
subinterval $I \subset [0,1)$ the set $x^{-1}(I)$ belongs to $\D_{\dv}$ and $\dv(x^{-1}(I))= \ell(I)$.

Now we can formulate the following generalization of Weyl's criterion.

\begin{proposition}\label{prop3}
Let $P_n=p_1^n\dots p_n^n$, $n=1,2,\dots$.
A mapping $x:\F \to [0,1)$ is $\dv$- uniformly distributed if and only if one of the following conditions holds:
\begin{equation}\label{eq6}
\lim_{n \to \infty} (n+1)^{-n}n\sum_{d|P_n} f(d) = \int^1_0 f(x)dx
\end{equation}
for every Riemann integrable function $f$ (or for continuous $f$)  on $[0,1]$;
\begin{equation}\label{eq7}
\lim_{n \to \infty} (n+1)^{-n}n\sum_{d|P_n} e^{2\pi i h x(d)} = 0 
\end{equation}
 for integers $h\neq 0$.
\end{proposition}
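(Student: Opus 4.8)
The plan is to recognize \eqref{eq6} and \eqref{eq7} as the Weyl criterion for the weak convergence of a sequence of empirical measures, with the divisor density $\dv$ playing the role of the limiting (Lebesgue) measure. For each $n$ let $\mu_n$ be the probability measure on $[0,1)$ that assigns equal mass $(n+1)^{-n}$ to each of the $(n+1)^n$ points $x(d)$, $d\mid P_n$; this is a probability measure precisely because $\U(P_n)$ has $(n+1)^n$ elements. With this notation, reading \eqref{eq6} as the assertion $\int_0^1 f\,d\mu_n\to\int_0^1 f(t)\,dt$ and \eqref{eq7} as $\int_0^1 e^{2\pi i h t}\,d\mu_n\to 0$, and observing that for any interval $I\subset[0,1)$ one has $\mu_n(I)=|x^{-1}(I)\cap\U(P_n)|/(n+1)^n$, the $\dv$-uniform distribution of $x$ is exactly the statement that $\mu_n(I)\to\ell(I)$ for every subinterval $I$ (the existence of the limit giving $x^{-1}(I)\in\D_{\dv}$, its value giving $\dv(x^{-1}(I))=\ell(I)$). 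Thus all three assertions concern the weak convergence $\mu_n\to\lambda$ to Lebesgue measure $\lambda$, and the proposition is the corresponding Weyl equivalence.

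First I would prove that $\dv$-uniform distribution is equivalent to \eqref{eq6}. If $x$ is $\dv$-u.d., then $\int\chi_I\,d\mu_n\to\ell(I)=\int_0^1\chi_I$ for every interval, hence by linearity the same holds for every step function; for a Riemann integrable $f$ one squeezes $f$ between step functions with arbitrarily close integrals (its lower and upper Darboux sums) and passes to the limit, which yields \eqref{eq6}. Conversely, assuming \eqref{eq6} for continuous $f$, I would approximate $\chi_I$ from below and above by continuous functions $f^-\le\chi_I\le f^+$ with $\int(f^+-f^-)<\varepsilon$; since each $\mu_n$ is a probability measure, $\limsup_n\mu_n(I)\le\int f^+\le\ell(I)+\varepsilon$ and $\liminf_n\mu_n(I)\ge\int f^-\ge\ell(I)-\varepsilon$, and letting $\varepsilon\to0$ gives $\mu_n(I)\to\ell(I)$, i.e.\ $\dv$-uniform distribution.

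Next I would establish \eqref{eq6}$\Leftrightarrow$\eqref{eq7}. The implication \eqref{eq6}$\Rightarrow$\eqref{eq7} is immediate: the functions $t\mapsto e^{2\pi i h t}$ are continuous and, for $h\neq0$, have integral $0$ over $[0,1]$, so \eqref{eq6} applied to their real and imaginary parts gives \eqref{eq7}. For \eqref{eq7}$\Rightarrow$\eqref{eq6} I would use that finite linear combinations of the exponentials $e^{2\pi i h t}$ (including the constant $h=0$ term) are uniformly dense in the continuous functions on the circle by the Weierstrass trigonometric approximation theorem; because the $\mu_n$ are probability measures, a uniform error of size $\delta$ in approximating $f$ by a trigonometric polynomial contributes at most $\delta$ to $|\int f\,d\mu_n-\int_0^1 f|$ uniformly in $n$, so \eqref{eq7} (the constant term being automatic) forces $\int f\,d\mu_n\to\int_0^1 f$ for continuous $f$, and the Darboux sandwiching of the previous step upgrades this to all Riemann integrable $f$.

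The main obstacle is the backward passage from the exponential sums \eqref{eq7} to full uniform distribution, where the approximation of the discontinuous indicators $\chi_I$ by smooth (or trigonometric) functions must be controlled uniformly in $n$; this is exactly where the normalization $\mu_n([0,1))=1$ is indispensable, and where one must be slightly careful with the interval endpoints and with the periodicity required by the trigonometric Weierstrass theorem (handled by working on $\R/\Z$ and noting $\ell$ is unaffected by single points). A secondary point worth isolating is that one must extract not merely the correct value $\ell(I)$ but the existence of the limit defining $\dv(x^{-1}(I))$, so that $x^{-1}(I)\in\D_{\dv}$; this is built into the squeeze argument, since the upper and lower continuous approximants pin both $\limsup_n\mu_n(I)$ and $\liminf_n\mu_n(I)$ to the common value $\ell(I)$.
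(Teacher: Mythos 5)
Your argument is correct, but note that the paper itself gives no proof of Proposition~\ref{prop3}: it is stated as a ``generalization of Weyl's criterion'' and left to the reader, so there is nothing to compare against line by line. Your proposal supplies exactly the standard argument one would expect: the empirical measures $\mu_n$ (pushforwards of the uniform measure on the $(n+1)^n$ divisors of $P_n$ under $x$) are probability measures, $\dv$-uniform distribution is weak convergence of $\mu_n$ to Lebesgue measure tested on intervals, and the three conditions are linked by the usual two-sided continuous/step-function sandwich plus the Weierstrass trigonometric approximation theorem; the observation that the squeeze pins down both $\limsup$ and $\liminf$, so that the defining limit for $\dv(x^{-1}(I))$ actually exists and $x^{-1}(I)\in\D_{\dv}$, is the right point to isolate. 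One remark: you silently read \eqref{eq6} and \eqref{eq7} as $(n+1)^{-n}\sum_{d\mid P_n} f(x(d))$, i.e.\ you correct what are evidently typographical slips in the statement (the stray factor $n$, which would make the left side of \eqref{eq6} diverge already for $f\equiv 1$, and $f(d)$ in place of $f(x(d))$, without which $f$ would be evaluated outside its domain). That correction is certainly the intended reading, but it deserves to be stated explicitly rather than assumed.
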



Let $\F_n$ be the semigroup generated by $\{p_1^n, p_2^n, \dots p_n^n, p_{n+1},p_{n+2}, \dots\}$, $n=1,2,\dots$.\\
Proposition \ref{prop2} implies that $a\F_k \in \D_{\dv}$ and $\dv(a\F_k)=\frac{1}{k^k}$.\\
Let $\D_0$ be the system of all subsets of $\F$ of the form $a_1\F_n \cup \dots\cup a_k\F_n$, where
$a_j \in \{p_1^{j_1}p_2^{j_2} \dots p_n^{j_n}, 0\le j_i < n \}$. Put $\Delta(A)= \dv(A)$ for $A \in \D_0$.
For $S \subset \F$ the value
$$
\na_e(S)= \inf \{\Delta(A); S \subset A, A \in \D_0\}
$$
is called {\it divisor measure density} of the set $S$. We denote by $\D_{\nu_e}$ the system of measurable sets.
It can be easily deduced that $\D_{\nu_e} \subset \D_{\dv}$ and $\dv(S)=\nu_e(S)$ for $S \in \D_{\nu_e}$.\\

Let us consider again $\mathcal{F}$, the set of all square free elements of $\F$.  This set has non-empty intersection only with the sets $aF_n$, $n=1,2,\dots$ and $a=p_1^{i_1} p_2^{i_2} \dots p_n^{i_n}$, where $i_j=0,1$.  Hence, $\na_e(\mathcal{F})\le \frac{2^n}{n^n}$, and for $n \to \infty$ we get
$\na_e(\mathcal{F})=0$. This yields $\na_e(\F \setminus \mathcal{F})=1$. So $\mathcal{F}$ is measurable and $\nu_e(\mathcal{F})=0$. Let us remark that the set of square free integers is not Buck measurable, its Buck measure density is $\frac{6}{\pi^2}$, and its complement has Buck measure density $1$. However, the set of square free numbers has asymptotic density $\frac{6}{\pi^2}$.

\begin{example}
For the set $\mathcal{F}$  we have $\int \chi_{\mathcal{F}} d\nu_e=0$. However, this function is discontinuous at each point
$a \in \mathcal{F}$.
\end{example}

Denote by $[S:\F_{n}]$ the number of sets $a_j\F_n$ such that $S \cap a_j\F_n \neq \emptyset$. Theorem \ref{thm1} implies 
\begin{proposition}\label{prop4}
 Let $\{b_k\}$ be a sequence of positive integers such that for every $d \in \N$ there exists $k_0$ such that for $k > k_0$ $d|b_k$. Then for $S \subset \F$
$$
\na_e(S) = \lim_{k \to \infty}\frac{[S:\F_{b_k}]}{b_k^{b_k}}.
$$
\end{proposition}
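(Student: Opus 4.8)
The plan is to read Proposition \ref{prop4} as a concrete instance of Theorem \ref{thm1}, applied to the system of decompositions underlying the divisor measure density $\na_e$. The real content is then to identify these decompositions, evaluate $\Delta$ on their blocks, and recognize that the divisibility hypothesis on $\{b_k\}$ is exactly the compatibility condition demanded by Theorem \ref{thm1}.

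First I would fix, for each $n$, the decomposition
$$\E_n = \{a\F_n : a = p_1^{j_1}\cdots p_n^{j_n},\ 0 \le j_i < n\},$$
and check that its $n^n$ blocks are pairwise disjoint and cover $\F$: writing $g = p_1^{e_1}p_2^{e_2}\cdots$, the residues $e_i \bmod n$ for $i \le n$ single out the unique $a$ with $g \in a\F_n$. By Proposition \ref{prop2} (with $\alpha_1 = \dots = \alpha_n = n$) each block satisfies $\Delta(a\F_n) = \dv(a\F_n) = n^{-n}$, so for every $S \subset \F$
$$\sum_{S \cap a_j\F_n \neq \emptyset} \Delta(a_j\F_n) = \frac{[S:\F_n]}{n^n}.$$
This is precisely the right-hand side of the proposition once the level is taken to be $b_k$.

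Next I would establish the compatibility hypothesis of Theorem \ref{thm1} for the sequence $c_k := b_k$, namely that every $A \in \D_0$ is, for all large $k$, a union of blocks of $\E_{b_k}$. The key step is the refinement lemma that $\E_m$ refines $\E_n$ whenever $n \mid m$. To prove it I would first note $\F_m \subseteq \F_n$ for $n \mid m$, since for $i \le n$ the generator $p_i^m = (p_i^n)^{m/n}$ lies in $\F_n$ while the higher generators are shared; then, describing a block as $a\F_n = \{g : e_i \equiv j_i\ (\mathrm{mod}\ n)\text{ for } i \le n\}$, one sees that a level-$m$ block $a'\F_m$ is contained in $a\F_n$ exactly when the first $n$ exponents of $a'$ reduce modulo $n$ to those of $a$, and collecting these exhibits each $a\F_n$ as a disjoint union of blocks of $\E_m$. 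Since an arbitrary $A \in \D_0$ is a finite union of blocks at a single level $n$, the hypothesis of the proposition applied with $d = n$ furnishes $k_0$ with $n \mid b_k$ for $k > k_0$, and then $A$ is a union of blocks of $\E_{b_k}$ for all such $k$.

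With the hypothesis in place, Theorem \ref{thm1} applies with $c_k = b_k$ and yields
$$\na_e(S) = \lim_{k \to \infty} \sum_{S \cap a_j\F_{b_k} \neq \emptyset} \Delta(a_j\F_{b_k}) = \lim_{k \to \infty} \frac{[S:\F_{b_k}]}{b_k^{b_k}},$$
which is the claim. I expect the only genuine obstacle to be the refinement lemma, that is, verifying cleanly that $\{\E_n\}$ ordered by divisibility satisfies condition ({\bf i}); everything else is either exponent bookkeeping or a direct appeal to Theorem \ref{thm1} and Proposition \ref{prop2}.
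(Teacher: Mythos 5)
Your proposal is correct and follows exactly the paper's route: the paper obtains Proposition \ref{prop4} as an immediate consequence of Theorem \ref{thm1} of Section 2 applied to the decompositions $\E_n=\{a\F_n\}$, which is precisely your argument. The refinement property ($n\mid m$ implies $\E_m$ refines $\E_n$, so the divisibility hypothesis on $\{b_k\}$ gives the compatibility condition) and the evaluation $\Delta(a\F_{b_k})=b_k^{-b_k}$ are the details the paper leaves unstated, and you have supplied them correctly.
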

In the sequel $\{b_k\}$ will be a sequence as in Proposition \ref{prop4}.
Denote $\F^s=\{a^s; s \in \F\}$ for  $s\in\N$. Suppose that $s|b_k$. Then the intersection $\F^s$ with
$p_1^{j_1}p_2^{j_2} \dots p_{b_k}^{j_{b_k}}\F_{b_k}$ is non-empty only if $s|j_i$ for all $i=1,\dots,b_k$. Therefore
\begin{equation*}
\na(\F^s) = \lim_{k \to \infty}\frac{(b_k/s)^{b_k}}{b_k^{b_k}}=\lim_{k \to \infty}\frac{1}{s^{b_k}}
\end{equation*}
and so for $s>1$ we get $\na_e(\F^s)=0$.

Let $\F'$ be the semigroup generated by $p_{j_1}, p_{j_2}, \dots,p_{j_n},\dots$. Then $p_1^{j_1}p_2^{j_2} \dots p_{b_k}^{j_{b_k}}\linebreak\F_{b_k} \cap \F'\neq \emptyset$ if and only if the exponents $j_i\neq 0$ are exactly the generators occurring in the sequence of generators of $\F'$. Denote by $R(k)$ the number of $r_n$ belonging to
$1,2,\dots, b_k$. Then
$$
\na_e(\F') = \lim_{k \to \infty}\Big(\frac{{b_k}^{R(k)}}{b_k^{b_k}}\Big).
$$
And so if $R(k) < b_k$ then $\na_e(\F')=0$.

Denote by ${\mathbb O}_r$ for given $r$ - positive integer the set of all elements of $\F$ in the form $p_{i_1}^{\beta_1}...p_{i_s}^{\beta_s}$ where
$\beta_i \ge r$. Then ${\mathbb O}_r \cap p_{1}^{j_1}...p_{n}^{j_n}\F_n \neq \emptyset$ only when $j_i>r$ or $j_i=0$. Thus for $n>r$ we have
$[{\mathbb O}_r:F_n]=(n-r)^n$. By application of Proposition I we get $\na_e({\mathbb O}_r)={\bold e}^{-r}$, ${\bold e}$ is Euler's number. From the other side
$(\F \setminus {\mathbb O}_r) \cap p_{1}^{j_1}...p_{n}^{j_n}\F_n \neq 0$ for all cases. Thus $\na_e(\F \setminus {\mathbb O}_r)=1$,
${\mathbb O}_r$ is not measurable.

Choose a sequence of positive integers
$\{n_k\}$ with $n_k^{n_k} | n_{k+1}^{n_{k+1}}$, in the same way as in \cite[page 42]{Pasteka}. It can be proved that $\nu_e$ has the Darboux property on $\D_{\nu_e}$, (see also \cite{PS}). Thus
$\{\nu_e(A); A \in \D_{\nu_e} \}=[0,1]$.

The following result follows immediately from Definition \ref{def1}.

\begin{proposition}\label{prop5}
A real valued function $f$ defined on $\F$  is Riemann integrable if and only if there exists a real number $\alpha$ such that for every system of finite sequences
$\{a^{(n)}_i; i=1,\dots,b_n^{b_n} \}$ with $a^{(n)}_i \in a_i\F_{b_n}$

$$
\lim_{n \to \infty} b_n^{-b_n}\sum_{i=1}^{b_n^{b_n}} f(a^{(n)}_i) = \alpha
$$
holds.
In this case $\alpha = \int f d\nu_e$.
\end{proposition}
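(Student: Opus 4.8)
The plan is to derive the statement directly from Definition~\ref{def1}, specialized to the free semigroup $\F$ with the universal sequence $\{c_n\}$ chosen to be $\{b_n\}$. First I would record the concrete form of the decompositions in this setting: the partition $\E_n$ consists of the cosets $a\F_n$ with $a$ ranging over the canonical representatives $\{p_1^{j_1}\cdots p_n^{j_n};\ 0\le j_i<n\}$, so that $k_n=n^n$, and by Proposition~\ref{prop2} each block satisfies $\Delta(a\F_n)=\dv(a\F_n)=n^{-n}$. In particular $k_{b_n}=b_n^{b_n}$ and $\Delta(A_i^{(b_n)})=b_n^{-b_n}$ for every $i$, which already matches the cardinality and weights appearing in the claimed formula.

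The one point that genuinely needs an argument is that $\{b_n\}$ is admissible in the sense of Definition~\ref{def1}, i.e.\ that every $A\in\D_0$ is, for all large $n$, a union of members of $\E_{b_n}$. Since a general $A\in\D_0$ is a finite union of cosets $a\F_m$, it suffices to treat a single such coset. The defining property of $\{b_n\}$ furnishes an $n_0$ with $m\mid b_n$ for $n\ge n_0$, and I would then check that $m\mid b_n$ forces $\F_{b_n}\subseteq\F_m$ by inspecting generators: for $i\le m$ the generator $p_i^{b_n}$ of $\F_{b_n}$ lies in $\F_m$ because $\F_m\ni p_i^m$ and $m\mid b_n$, while for $i>m$ the relevant generator (either $p_i^{b_n}$ or $p_i$) already belongs to $\F_m$ since $p_i\in\F_m$. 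Hence $\F_m$, and therefore $a\F_m$, decomposes as a finite disjoint union of cosets of $\F_{b_n}$, which after passing to canonical representatives are exactly members of $\E_{b_n}$. This semigroup verification is the main (and essentially the only) obstacle, but it is routine.

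With admissibility established, the conclusion is immediate. Substituting $\Delta(A_i^{(b_n)})=b_n^{-b_n}$ into the defining limit of Definition~\ref{def1} gives
\begin{equation*}
\sum_{i=1}^{k_{b_n}}\Delta(A_i^{(b_n)})\,f(a_i^{(n)})
= b_n^{-b_n}\sum_{i=1}^{b_n^{b_n}} f(a_i^{(n)}),
\end{equation*}
with $a_i^{(n)}\in a_i\F_{b_n}=A_i^{(b_n)}$, so that the existence of a common limit $\alpha$ over all admissible choices of representatives is precisely the definition of Riemann integrability of $f$. Finally, the identification $\alpha=\int f\,d\nu_e$ is exactly the equality $\alpha=\int f$ of Definition~\ref{def1}, the symbol $\int$ there denoting the integral attached to the measure density $\na_e$.
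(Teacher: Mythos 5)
Your proposal is correct and follows exactly the route the paper intends: the paper offers no written proof beyond the remark that the proposition ``follows immediately from Definition~\ref{def1}'', and your argument is precisely that specialization, with the only substantive point --- the admissibility of $\{b_n\}$, i.e.\ that $m\mid b_n$ forces $\F_{b_n}\subseteq\F_m$ and hence that each $a\F_m$ splits into canonical blocks of $\E_{b_n}$ --- correctly identified and verified, together with $k_{b_n}=b_n^{b_n}$ and $\Delta(A_i^{(b_n)})=b_n^{-b_n}$. In short, you supply the details the paper omits, but by the same method.
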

A mapping $x: \F \to [0,1]$ is called {\it uniformly divisor measurable} if and only if for every
subinterval $I \subset [0,1)$ the set $x^{-1}(I)$ belongs to $\D_{\nu_e}$ and $\nu_e(x^{-1}(I))= \ell(I)$.

The mentioned mapping is a real valued net defined on $\F$ (considered as a partially ordered set).

From the definition it follows immediately: If $\{y_n\}$ is $\nu_e$-B.u.d.\ sequence in $\F$ and $x: \F \to [0,1]$ is a uniformly divisor measurable map, then $\{x(y_n)\}$ is a B.u.d.\ sequence in $[0,1]$.
\begin{example}
Let $\F = \N$ and $\{b_k\}$ fulfilling additionally the ralations $b_k^{b_k}|b_{k+1}^{b_{k+1}}$ for $k\in\N$. Consider the following partition of the unit interval
$$
[0,1) = \bigcup_{j=1}^{b_k^{b_{k}}} I_j^{(k)}, \ I_j^{(k)}= \left[\frac{j-1}{b_k^{b_{k}}}, \frac{j}{b_k^{b_{k}}}\right)
$$
for $k=1,2,\dots$.

Suppose that the sequence $\{y_n\}$ of elements $[0,1)$ satisfies :

\begin{equation*}
y_n \in I_j^{(k)} \Longleftrightarrow n \in a_jF_{b_k}.
\end{equation*}

Then $y^{-1}(I_j^{(k)}) =A\left(\{y_n\}, I_j^{(k)}\right) = a_jF_{b_k}$. Thus $A(\{y_n\}, I_j^{(k)}) \in \D_{\nu_e}$ and $\nu_e(A(\{y_n\}, I_j^{(k)})) =\ell(I_j^{(k)})$.
The set $\left\{\frac{j}{b_k^{b_{k}}}; j=1,\dots, b_k^{b_{k}}, k=1,2,\dots\right\}$ is dense in $[0,1)$ and so $\{y_n\}$ is uniformly divisor measurable.
For the details see \cite{H}.
\end{example}

For every mapping $x: \F \to [0,1]$ we have $\chi_{x^{-1}(I)}(a) = \chi_I(x(a))$ for every $a$  and $I\subset [0,1)$.
We can state the generalization of Weyl's criterion in this case.
\begin{proposition}
A mapping $x:\F \to [0,1)$ is uniformly divisor measurable if and only for every system of finite sequences
$\{a^{(n)}_i; i=1,\dots,b_n^{b_n} \}$ with $a^{(n)}_i \in a_i\F_{b_n}$ 

\begin{equation*}
\lim_{n \to \infty} b_n^{-b_n}\sum_{i=1}^{b_n^{b_n}} f(x(a^{(n)}_i)) = \int^1_0 f(x)dx
\end{equation*}
holds for every Riemann integrable function $f$ on $[0,1]$.
\end{proposition}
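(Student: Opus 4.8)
The plan is to reduce both directions of the equivalence to two facts already available: the characterization of Riemann integrability on $\F$ given in Proposition \ref{prop5}, and the remark following Definition \ref{def1}, according to which a set $B \subset \F$ belongs to $\D_{\nu_e}$ precisely when $\chi_B$ is Riemann integrable, in which case $\nu_e(B) = \int \chi_B \, d\nu_e$. The device that connects the two sides is the elementary pointwise identity $\chi_{x^{-1}(I)}(a) = \chi_I(x(a))$, valid for all $a \in \F$ and all $I \subset [0,1)$; it rewrites a Riemann sum built from $f \circ x$ on $\F$ as one built from an indicator function of a subset of $\F$.

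First I would prove that the Weyl-type limit forces uniform divisor measurability. Fix a subinterval $I \subset [0,1)$ and apply the hypothesis to the Riemann integrable function $f = \chi_I$; this gives
$$\lim_{n \to \infty} b_n^{-b_n}\sum_{i=1}^{b_n^{b_n}} \chi_I(x(a^{(n)}_i)) = \int_0^1 \chi_I(x)\,dx = \ell(I)$$
for every choice of representatives $a^{(n)}_i \in a_i\F_{b_n}$. Rewriting each summand as $\chi_{x^{-1}(I)}(a^{(n)}_i)$ and invoking Proposition \ref{prop5}, I conclude that $\chi_{x^{-1}(I)}$ is Riemann integrable with $\int \chi_{x^{-1}(I)}\,d\nu_e = \ell(I)$. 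The remark following Definition \ref{def1} then yields $x^{-1}(I) \in \D_{\nu_e}$ and $\nu_e(x^{-1}(I)) = \ell(I)$, which is exactly uniform divisor measurability.

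For the converse I would traverse the same chain in reverse and then bootstrap from intervals to general integrands. Assuming $x$ uniformly divisor measurable, the remark and Proposition \ref{prop5} give the target limit whenever $f = \chi_I$, and hence, by linearity of both the Riemann sums and the integral, for every step function $\sum_j c_j \chi_{I_j}$ formed from finitely many subintervals. The remaining work is the classical squeezing argument: given a Riemann integrable $f$ and $\varepsilon > 0$, I would select step functions $\underline{s} \le f \le \overline{s}$ with $\int_0^1 (\overline{s} - \underline{s})\,dx < \varepsilon$, apply the step-function case to $\underline{s}$ and $\overline{s}$, and trap $b_n^{-b_n}\sum_{i} f(x(a^{(n)}_i))$ between two quantities converging to $\int_0^1 \underline{s}\,dx$ and $\int_0^1 \overline{s}\,dx$, each within $\varepsilon$ of $\int_0^1 f\,dx$; letting $\varepsilon \to 0$ closes the argument.

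The one place demanding care is this last approximation, although the difficulty is organizational rather than conceptual. I must take the approximating step functions as finite combinations of indicators of right half-open subintervals so that every piece falls under the interval case already settled, and I must ensure the squeezing is valid uniformly over the admissible choices of representatives $a^{(n)}_i$. The latter is automatic, since the interval case supplied by Proposition \ref{prop5} holds for every such system of representatives, so the resulting limit for $f$ is independent of the choice, as the statement requires.
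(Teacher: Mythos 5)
Your argument is correct and follows exactly the route the paper intends: the paper itself gives no proof of this proposition, only recording the key identity $\chi_{x^{-1}(I)}(a)=\chi_I(x(a))$ immediately beforehand, and your proof is precisely that identity combined with Proposition \ref{prop5}, the remark after Definition \ref{def1}, and the standard step-function squeeze for the converse. Nothing in your write-up conflicts with the paper, and the care you take about half-open subintervals and uniformity over the choice of representatives $a^{(n)}_i$ is exactly the right bookkeeping.
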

\begin{proposition}
Let $H \subset \F$ be the semigroup generated by the generators
$\{p_1^{\alpha_1},\dots, p_k^{\alpha_k}, p_{k+1},p_{k+2},\dots\}$, $\alpha_i>0$. Then $H \in \D_{\nu_e}$ and
$\nu_e(H)=\frac{1}{\alpha_1...\alpha_k}$ .
\end{proposition}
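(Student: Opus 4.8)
The plan is to observe that $H$, although defined by infinitely many generators, becomes a \emph{finite} union of cosets of $\F_n$ as soon as the scale $n$ is chosen divisible by all the exponents $\alpha_i$; once $H\in\D_0$ is established, both measurability and the value of $\nu_e(H)$ follow with almost no extra work. Concretely, I would fix any common multiple $n$ of $\alpha_1,\dots,\alpha_k$ with $n\ge k$ and claim
\[
H=\bigcup p_1^{j_1}\cdots p_n^{j_n}\,\F_n,
\]
the union running over all index vectors with $0\le j_i<n$ and $\alpha_i\mid j_i$ for $i\le k$ (the exponents $j_{k+1},\dots,j_n$ being unrestricted). This exhibits $H$ as a finite union of cosets $a\F_n$ with representatives of the admissible shape $p_1^{j_1}\cdots p_n^{j_n}$, $0\le j_i<n$, so that $H\in\D_0$.

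To verify the displayed identity I would argue both inclusions directly from the arithmetic of exponents. For the inclusion ``$\supseteq$'': since $\alpha_i\mid j_i$ and $\alpha_i\mid n$, every element of the coset $p_1^{j_1}\cdots p_n^{j_n}\F_n$ has $p_i$-exponent $j_i+nt_i$ (for $i\le k$) divisible by $\alpha_i$, hence lies in $H$. For ``$\subseteq$'': given $a\in H$, reduce each $p_i$-exponent modulo $n$; the residue $j_i$ inherits divisibility by $\alpha_i$ for $i\le k$ because $\alpha_i\mid n$, placing $a$ in the corresponding coset. Counting the admissible vectors gives $\prod_{i\le k}(n/\alpha_i)\cdot n^{\,n-k}=n^n/(\alpha_1\cdots\alpha_k)$ cosets, each of $\Delta$-mass $n^{-n}$, which already recomputes $\Delta(H)=1/(\alpha_1\cdots\alpha_k)$ in agreement with Proposition \ref{prop2}.

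With $H\in\D_0$ in hand the conclusion is immediate: for any $A\in\D_0$ one has $\na_e(A)=\Delta(A)$, and since $\F\setminus A\in\D_0$ as well, $\na_e(\F\setminus A)=\Delta(\F\setminus A)=1-\Delta(A)$, so $\na_e(A)+\na_e(\F\setminus A)=1$ and $A$ is $\na_e$-measurable; applying this to $A=H$ gives $H\in\D_{\nu_e}$ and $\nu_e(H)=\Delta(H)=\dv(H)=1/(\alpha_1\cdots\alpha_k)$ by Proposition \ref{prop2}. A more computational route, closer in spirit to the $\na_e$-evaluations performed earlier in this section, would instead feed the same coset count into Proposition \ref{prop4}: the decisive point there is the \emph{dichotomy} that a coset $a_j\F_{b_k}$ either lies entirely in $H$ or is disjoint from it, giving $[H:\F_{b_k}]=b_k^{b_k}/(\alpha_1\cdots\alpha_k)$ and $[\F\setminus H:\F_{b_k}]=b_k^{b_k}-b_k^{b_k}/(\alpha_1\cdots\alpha_k)$, hence $\na_e(H)+\na_e(\F\setminus H)=1$ directly. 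The only genuinely substantive step in either route — and the one I would state most carefully — is the divisibility bookkeeping: because $\alpha_i\mid b_k$, the congruence cutting out $H$ is constant on each coset of $\F_{b_k}$, which is exactly what forces the all-or-nothing dichotomy and makes $H$ compatible with the partition at that scale.
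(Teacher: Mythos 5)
Your proof is correct and follows essentially the same route as the paper: choose $n$ a common multiple of $\alpha_1,\dots,\alpha_k$ with $n>k$, write $H$ as the finite union of cosets $p_1^{j_1}\cdots p_n^{j_n}\F_n$ with $\alpha_i\mid j_i$ for $i\le k$, and count $\prod_{i\le k}(n/\alpha_i)\cdot n^{n-k}$ cosets of mass $n^{-n}$. You merely make explicit the two inclusions and the passage from $H\in\D_0$ to measurability, which the paper leaves implicit.
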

\begin{proof}
Consider $n>k$ and $n$ divisible by all $\alpha_i$. The $\F_n \subset H$ and
$$
H = \bigcup p_1^{j_1\alpha_1}...p_k^{j_k\alpha_k}p_{k+1}^{j_{k+1}}...p_n^{j_{n}}\F_n
$$
where $j_1\alpha_1<n,\dots,j_k\alpha_k <n , j_{k+1}<n,\dots,j_{n}<n$. Thus $H \in \D_{\nu_e}$ and
$$
\nu_e(H)=\frac{n}{\alpha_1}\dots\frac{n}{\alpha_k}n^{(n-k)} \cdot n^{-n}=\frac{1}{\alpha_1...\alpha_k}.
$$
\end{proof}
\begin{proposition}
Let $S \in \D_{\nu_e}$ and $a \in F$. Then $aS \in \D_{\nu_e}$ and $\nu_e(aS)=\nu_e(S)$.
\end{proposition}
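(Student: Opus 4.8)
The plan is to reduce to the case where $a=p$ is a single generator. Since every $a\in\F$ is a finite product of generators and $S\mapsto aS$ then factors as a composition of multiplications by generators, the general statement follows by iterating the one-generator case, exactly as in the proof of the corresponding statement for the divisor density $\dv$ at the beginning of this section. So from now on I take $a=p=p_j$.

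The key observation is that multiplication by $p$ merely permutes the cosets $a_j\F_n$ of each decomposition $\E_n$. Encode the coset of $x=\prod_i p_i^{e_i(x)}$ by the label $\pi_n(x)=(e_1(x)\bmod n,\dots,e_n(x)\bmod n)\in(\Z/n\Z)^n$; then $S\cap a_j\F_n\neq\emptyset$ exactly when the label of $a_j$ lies in $\pi_n(S)=\{\pi_n(x):x\in S\}$, so that $[S:\F_n]=|\pi_n(S)|$. Because $\pi_n(px)=\pi_n(x)+\pi_n(p)$ (addition in $(\Z/n\Z)^n$), we get $\pi_n(pS)=\pi_n(p)+\pi_n(S)$, a translate of $\pi_n(S)$. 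As translation by a fixed vector is a bijection of $(\Z/n\Z)^n$, this gives $[pS:\F_n]=[S:\F_n]$ for every $n$. Evaluating the limit formula of Proposition \ref{prop4} along the sequence $\{b_k\}$ then yields $\na_e(pS)=\na_e(S)$, and I stress that this identity holds for \emph{every} $S\subset\F$, not only for measurable ones.

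The delicate point is measurability, since multiplication by $p$ is injective but not surjective, so $\F\setminus pS\neq p(\F\setminus S)$. Instead I use the disjoint decomposition
\begin{equation*}
\F\setminus pS=(\F\setminus p\F)\cup p(\F\setminus S),
\end{equation*}
where $p(\F\setminus S)=p\F\setminus pS$ follows from injectivity. The second term has $\na_e$-value $\na_e(\F\setminus S)$ by the identity just proved (applied to $\F\setminus S$), while the ``defect'' term is $\na_e$-null: for $n>j$ the set $\F\setminus p\F$ consists of elements with $e_j=0$, hence meets only the $n^{n-1}$ cosets whose label has $j$-th coordinate $0$, so $[\F\setminus p\F:\F_n]=n^{n-1}$ and the limit of Proposition \ref{prop4} vanishes, giving $\na_e(\F\setminus p\F)=0$. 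By finite subadditivity of the outer measure $\na_e$,
\begin{equation*}
\na_e(\F\setminus pS)\le\na_e(\F\setminus p\F)+\na_e(p(\F\setminus S))=\na_e(\F\setminus S)=1-\na_e(S),
\end{equation*}
the last step using $S\in\D_{\nu_e}$. Together with $\na_e(pS)=\na_e(S)$ this yields $\na_e(pS)+\na_e(\F\setminus pS)\le1$, while subadditivity applied to $\F=pS\cup(\F\setminus pS)$ gives the reverse inequality $\ge\na_e(\F)=1$. Hence $pS\in\D_{\nu_e}$ and $\nu_e(pS)=\na_e(pS)=\na_e(S)=\nu_e(S)$, which completes the one-generator case and thus the proof. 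The main obstacle is precisely this non-surjectivity defect; once $\na_e(\F\setminus p\F)=0$ is established, the density identity and measurability drop out simultaneously.
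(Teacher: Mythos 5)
Your proof is correct, but it takes a genuinely different route from the paper's. The paper starts from measurability of $S$: it sandwiches $S$ between two finite unions of cosets with $\Delta$-difference less than $\varepsilon$, applies $p_i$ to both, observes that $p_i a_j\F_n$ is again (contained in) a coset unless the exponent of $p_i$ in $a_j$ equals $n-1$ (in which case its outer density is at most $n^{-n}$, and there are at most $n^{n-1}$ such offending cosets on each side), and bounds the resulting gap by $\varepsilon+\tfrac{2}{n}$; this yields measurability and the value in one stroke. You instead prove the exact combinatorial identity $[pS:\F_n]=[S:\F_n]$ by viewing the coset of $x$ as its exponent vector in $(\Z/n\Z)^n$ and multiplication by $p$ as a translation, which via Proposition \ref{prop4} gives $\na_e(pS)=\na_e(S)$ for \emph{every} subset $S\subset\F$, measurable or not --- a strictly stronger statement than the paper establishes. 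You then recover measurability separately through the decomposition $\F\setminus pS=(\F\setminus p\F)\cup p(\F\setminus S)$ and the observation that the non-surjectivity defect $\F\setminus p\F$ is $\na_e$-null (its labels are confined to the $n^{n-1}$ vectors with vanishing $j$-th coordinate). Your version cleanly separates the two issues the paper's $\varepsilon+\tfrac{2}{n}$ bound handles jointly, and the translation bijection on labels makes transparent why no density can be gained or lost; the paper's argument is closer in spirit to the approximation techniques used elsewhere in the text but delivers only the measurable case. Both arguments rely equally on the paper's (slightly informal) convention that the sets $a_j\F_n$ partition $\F$ according to exponents modulo $n$, so no additional gap is introduced on that front.
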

\begin{proof}
It is sufficient to prove this statement for $a=p_i$, one of the generators. Let $n>i$.
If the exponent of $p_i$ in $b$ does not exceed $n-1$ then $p_ib\F_n \in \D_{\nu_e}$. If $b =p_i^{n-1}b'$ and
$p_i$ not occurring in $b'$, then $p_ib\F_n \subset b'F_n$. Thus $\na_e(p_ib\F_n) \le n^{-n}$.\\
Suppose that for $\varepsilon > 0$ there exists $n>i$ such that
 $$
 \bigcup_{j=1}^k a_jF_n \subset S \subset \bigcup_{j=1}^s a_jF_n, \ \frac{s-k}{n^n} < \varepsilon.
 $$
 Then
 $$
 \bigcup_{j=1}^k p_ia_jF_n \subset p_iS \subset \bigcup_{j=1}^s p_ib_jF_n.
 $$
 The sequences $a_j$, $j=1,\dots, k$, $b_j, j=1,\dots, s$ contain at most $n^{n-1}$ elements divisible by $p_i^{n-1}$. Thus
 $$
 \na_e\Big(\bigcup_{j=1}^s p_ib_jF_n \setminus \bigcup_{j=1}^k p_ia_jF_n\Big) \le
 \frac{s+n^{n-1}}{n^n}-\frac{k-n^{n-1}}{n^n} <\varepsilon +\frac{2}{n}.
$$
\end{proof}

 Let $g$ be a bijection on the set of generators. We can extend this mapping to an automorphism of $\F$. Proposition \ref{prop4} and Proposition \ref{prop5}
 provide that $g$ fulfills condition (2) of Corollary \ref{corollary2}. We have proved that $g$ preserves divisors measure density.
On the other hand each automorphism $\F$ is uniquely determined by its values on the set of generators. Thus each automorphism on $\F$ preserves divisor measure density or $\nu_e$ does not depend on the order of generators.
\begin{corollary}
For $b \in \F$ it holds $b\F \in  \D_{\nu_e}$ and $\nu_e(b\F)=1$. If $S \subset \F$ and $b \in \F$ then $S \in  \D_{\nu_e}$ if and only if $(b\F)\cap S \in  \D_{\nu_e}$.
 In this case $\nu_e(S)=\nu_e(b\F\cap S)$.
\end{corollary}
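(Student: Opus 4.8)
The plan is to derive everything from the translation-invariance property established in the preceding proposition, namely that $S \in \D_{\nu_e}$ implies $aS \in \D_{\nu_e}$ with $\nu_e(aS) = \nu_e(S)$, together with the fact that $\D_{\nu_e}$ is an algebra carrying the finitely additive probability measure $\nu_e = \na_e|_{\D_{\nu_e}}$. First I would observe that the whole space $\F$ is trivially $\na_e$-measurable with $\nu_e(\F) = 1$, since $\na_e(\F) = 1$ and $\na_e(\emptyset) = 0$. Applying the preceding proposition with $S = \F$ and $a = b$ then yields at once $b\F \in \D_{\nu_e}$ and $\nu_e(b\F) = \nu_e(\F) = 1$, which is the first assertion.

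The key point for the second assertion is that the complement $\F \setminus b\F$ is $\na_e$-null. Indeed, both $\F$ and $b\F$ lie in $\D_{\nu_e}$, which is an algebra, so $\F \setminus b\F \in \D_{\nu_e}$ and by finite additivity $\na_e(\F \setminus b\F) = \nu_e(\F) - \nu_e(b\F) = 0$. I would then recall the standard feature of a Carath\'eodory extension that a set of outer measure zero is automatically measurable: if $\na_e(E) = 0$, then subadditivity gives $1 = \na_e(\F) \le \na_e(E) + \na_e(\F \setminus E) = \na_e(\F \setminus E) \le 1$, whence $\na_e(E) + \na_e(\F \setminus E) = 1$ and $E \in \D_{\nu_e}$. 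In particular every subset of $\F \setminus b\F$ is both $\na_e$-null and $\na_e$-measurable.

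With this in hand the equivalence becomes pure bookkeeping. For arbitrary $S \subset \F$ I write $S$ as the disjoint union $S = (S \cap b\F) \cup (S \setminus b\F)$, where $S \setminus b\F \subseteq \F \setminus b\F$ is null, hence measurable with $\nu_e(S \setminus b\F) = 0$. If $S \in \D_{\nu_e}$, then $S \cap b\F \in \D_{\nu_e}$ because $\D_{\nu_e}$ is closed under intersection; conversely, if $S \cap b\F \in \D_{\nu_e}$, then $S$ is the union of a measurable set and a null set, hence measurable. In either case finite additivity gives
\begin{equation*}
\nu_e(S) = \nu_e(S \cap b\F) + \nu_e(S \setminus b\F) = \nu_e(b\F \cap S),
\end{equation*}
which is the claimed identity.

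I do not expect a genuine obstacle: the real content has already been absorbed into the preceding proposition (the invariance $\nu_e(aS) = \nu_e(S)$), and the remaining argument only exploits that $\nu_e(b\F) = 1$ forces $\F \setminus b\F$ to be negligible, together with the algebra structure of $\D_{\nu_e}$. The one step that deserves explicit care, and which I single out above, is the automatic $\na_e$-measurability of null sets, since it is what allows the defect $S \setminus b\F$ to be discarded in both directions of the equivalence.
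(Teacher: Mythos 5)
Your proof is correct and follows the route the paper intends: the corollary is stated without proof as a consequence of the preceding proposition, and your argument (apply that proposition with $S=\F$ to get $\nu_e(b\F)=1$, then observe that $\F\setminus b\F$ is $\na_e$-null, that null sets and their subsets are automatically measurable, and decompose $S=(S\cap b\F)\cup(S\setminus b\F)$) is exactly the standard derivation. The only step the paper leaves fully implicit that you rightly make explicit is the automatic measurability of $\na_e$-null sets via subadditivity.
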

\section*{Acknowledgments}
The first and third author would like to acknowledge the support of the Austrian Science Fund (FWF): F5510. They are also grateful to Prof. O. Strauch from the Slovak Academy of Science and Prof. V. Bal\'{a}\v{z} from Comenius University in Bratislava for the fruitful discussions they had during their visit in Bratislava in November 2014.
\bibliography{ipt}
\bibliographystyle{abbrv}
\end{document}